\newtheorem{corollary}{Corollary}
\newtheorem{lemma}[corollary]{Lemma}
\newtheorem{definition}[corollary]{Definition}
\newtheorem{theorem}[corollary]{Theorem}
\begin{document}

\title{\textbf{\large{Three Remarks on Carleson Measures for Dirichlet Space}}}

\author{\small{GUOZHENG CHENG,
XIANG FANG, ZIPENG WANG, AND JIAYANG YU}}
\date{}
\maketitle
\def\cc{\mathbb{C}}
\def\zz{\mathbb{Z}}
\def\nn{\mathbb{N}}
\def\rr{\mathbb{R}}
\def\qq{\mathbb{Q}}
\def\dd{\mathbb{D}}
\def\tt{\mathbb{T}}
\def\bb{\mathbb{B}}
\def\ff{\mathbb{F}}

\def\A{\mathcal{A}}
\def\B{\mathcal{B}}
\def\D{\mathcal{D}}
\def\L{\mathcal{L}}
\def\M{\mathcal{M}}
\def\Mperp{\mathcal{M}^\perp}
\def\N{\mathcal{N}}
\def\K{\mathcal{K}}
\def\F{\mathcal{F}}
\def\R{\mathcal{R}}
\def\s{\mathcal{S}}
\def\p{\mathcal{P}}
\def\P{\mathcal{P}}
\def\T{\mathcal{T}}
\def\O{\mathcal{O}}
\def\Z{\mathcal{Z}}
\def\hh{\mathbb{H}}

\def\cod{\text{cod}}
\def\fd{\text{fd}}
\def\ker{\text{ker}}
\def\ran{\text{ran}}
\def\mp{\text{mp}}
\def\mp{\text{mp}}
\def\ee{\mathbb{E}}
\def\re{\text{Re}}
\def\im{\text{Im}}

\def\al{\alpha}
\def\la{\lambda}
\def\ep{\epsilon}
\def\sig{\sigma}
\def\Sig{\Sigma}
\def\cd{\mathbb{C}^d}
\def\bm{\mathcal{M}}
\def\bn{\mathcal{N}}
\def\hN{H^2\otimes \mathbb{C}^N}
\def\ba{\mathcal{A}}
\def\hm{H^2\otimes \mathbb{C}^m}
\def\mb{\mathcal{M}^{\perp}}
\def\pr{{\mathbb C}[z_1,\cdots,z_n]}
\def\nb{\mathcal{N}^{\perp}}
\def\hrd{H^2(\mathbb{D}^n)}
\def\be{\mathcal{E}}
\def\po{{\mathbb C}[z,\,w]}
\def\hr{H^2({\mathbb D}^2)}
\def\bigpa#1{\biggl( #1 \biggr)}
\def\bigbracket#1{\biggl[ #1 \biggr]}
\def\bigbrace#1{\biggl\lbrace #1 \biggr\rbrace}

\def\papa#1#2{\frac{\partial #1}{\partial #2}}
\def\dbar{\bar{\partial}}

\def\oneover#1{\frac{1}{#1}}

\def\meihua{\bigskip \noindent $\clubsuit \ $}
\def\blue#1{\textcolor[rgb]{0.00,0.00,1.00}{#1}}
\def\red#1{\textcolor[rgb]{1.00,0.00,0.00}{#1}}

\def\norm#1{||#1||}
\def\inner#1#2{\langle #1, \ #2 \rangle}

\def\bigno{\bigskip \noindent}
\def\medno{\medskip \noindent}
\def\smallno{\smallskip \noindent}
\def\bignobf#1{\bigskip \noindent \textbf{#1}}
\def\mednobf#1{\medskip \noindent \textbf{#1}}
\def\smallnobf#1{\smallskip \noindent \textbf{#1}}
\def\nobf#1{\noindent \textbf{#1}}
\def\nobfblue#1{\noindent \textbf{\textcolor[rgb]{0.00,0.00,1.00}{#1}}}

\def\vector#1#2{\begin{pmatrix}  #1  \\  #2 \end{pmatrix}}

\def\cfh{\mathfrak{CF}(H)}

\begin{abstract}

  In this paper we prove that all doubling measures on the unit disk $\dd$ are Carleson measures for the standard  Dirichlet space $\mathcal{D}$. The proof has three new ingredients. The first one is a characterization of Carleson measures which holds true for general  reproducing kernel Hilbert spaces.
  The second one is another new equivalent condition for Carleson measures, which holds true only for the standard Dirichlet space. 
  The third one is an application of dyadic method to our setting.  
\end{abstract}

\footnote{
 \textbf{Keywords:}  Carleson measure; doubling measure; Dirichlet space.

~~~\textbf{Mathematics Subject Classification [MSC]:} 47B34, 47G10.}



\section{Introduction and the Main Result }
\noindent In this paper we study  Carleson measures on the standard Dirichlet space $\D$, which consists of analytic functions over the unit disk $\dd \subset \cc$ under the norm
$$\|f\|^2_\D=|f(0)|^2+\frac{1}{\pi}\int_\dd |f'(z)|^2 dA(z)<\infty.$$
 A positive measure $\mu$ on $\dd$ is called a $\D$-Carleson measure if
there exists a constant $C$ such that
$$
\int_{\dd}|f|^2d\mu\leq C\|f\|_{\D}^2,\ \ \ f\in \D.
$$
This is certainly a well studied topic in operator theory with a  long history.
Carleson measures were originally  introduced by L. Carleson in his 1962 solution of the corona problem, and have become one of the most cherished tools in analysis.  They have proved to be powerful in a great variety of problems.  In case of  the Hardy space or the Bergman space on the unit disk,  Carleson measures are characterized by beautiful geometric conditions  \cite{Zhu}. In contrast,   $\D$-Carleson measures remain somehow elusive after several decades of study and their characterization is   capacitary  \cite{Dirichlet book}, \cite{Ste1980}, \cite{Wu1999}, hence hard to check. This makes the search for sufficient conditions meaningful.
 In particular, the recent work of El-Fallah-Kellay-Mashreghi-Ransford \cite{EKMR2014}   contains an elegant sufficient condition for $\D$-Carleson measures, which  is especially effective if armed with  growth estimates near the boundary. This is  a recurrent theme for $\D$-Carleson measures in the past, both for characterizations and for sufficient conditions.
The purpose of this paper is to show that a large class of commonly used measures are all $\D$-Carleson.

\bigno Nowadays there exists an extensive literature on generalized Dirichlet spaces with generalized Carleson measures. Such aspects will not be considered in this paper. But we do  keep this group of readers in mind and make things convenient for them when it is at not much extra cost. (See the remark before Lemma \ref{L:twp reverse doubling}.)

\bigno
 A measure $\mu$ on the unit disk $\dd$ is called a doubling measure if there exists a constant $C$ such that
\begin{equation*}
\mu(B(z,2r))\leq C \mu(B(z,r))
\end{equation*}
for all $z\in\dd$ and $r>0$, where $B(z,r)=\{w\in\dd:|z-w|<r\}$. Doubling measures arise naturally in various ways in analysis and geometry and there is an industry building on them \cite{stein}. We restrict our attention to those  which are absolutely continuous with respect to the Lebesgue measure and we call them weights.

\begin{theorem}\label{T:main0405}If $\sigma$ is a finite reverse doubling measure that is absolutely continuous with respect to the Lebesgue measure on $\dd$, then it is a $\mathcal{D}$-Carleson measure.
\end{theorem}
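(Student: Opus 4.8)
\medskip
\noindent\textbf{Proof strategy.} The plan is to transfer the problem to a dyadic tree, reduce $\D$-Carleson-ness to a single checkable condition there, and then verify that condition using the reverse doubling hypothesis alone. Fix the dyadic grid of arcs on $\tt$, let $\T$ be the associated tree (a vertex $\gamma$ is a dyadic arc $I_\gamma$, its two children being the two halves), and write $S(I)=\{z\in\dd:\ z/|z|\in I,\ 1-|z|\le|I|\}$ for the Carleson box over an arc $I$. Put $\widehat\sigma(\gamma)=\sigma(S(I_\gamma))$. Combining the general reproducing-kernel characterization with its Dirichlet-space refinement and then discretizing (the three ingredients advertised above), one is led to a tree condition of Arcozzi--Rochberg--Sawyer type: $\sigma$ is $\D$-Carleson if and only if there is a constant $C$ with
\[
\sum_{\gamma\ge\beta}\widehat\sigma(\gamma)^2\ \le\ C\,\widehat\sigma(\beta)\qquad\text{for every }\beta\in\T,
\]
where $\gamma\ge\beta$ means $I_\gamma\subseteq I_\beta$. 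I would take this equivalence as the scaffolding and devote the real work to verifying the displayed inequality for reverse doubling weights.

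\medskip
\noindent\textbf{The role of reverse doubling.} The key point is that reverse doubling forbids mass from saturating at any scale below the diameter, and this forces the box masses to decay geometrically as one descends the tree. Concretely, I would prove a uniform parent--child estimate
\[
\widehat\sigma(\gamma')\ \le\ \theta\,\widehat\sigma(\gamma)\qquad(\theta<1,\ \gamma'\text{ a child of }\gamma).
\]
The idea is to realize $S(I_\gamma)$ and $S(I_{\gamma'})$, up to universal geometric constants, as Euclidean balls of comparable radius centred near the tops of the two boxes; passing from the child scale to the parent scale is a bounded number of doublings of the radius, and reverse doubling, $\sigma(B(z,2r))\ge(1+\delta)\sigma(B(z,r))$, supplies the gain $(1+\delta)$ at each doubling. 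Since a single doubling need not beat the fixed comparability constants, I would iterate over a fixed number $m$ of generations, obtaining $\theta=\theta(\delta,m)<1$; absolute continuity is used here to keep balls and boxes comparable without exceptional sets. Iterating yields $\widehat\sigma(\gamma)\le\theta^{\,n}\,\widehat\sigma(\beta)$ whenever $\gamma$ lies $n$ generations below $\beta$.

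\medskip
\noindent\textbf{Summation.} With geometric decay in hand the tree condition is immediate. For fixed $n$ the descendants $\gamma$ of $\beta$ at distance $n$ have pairwise disjoint boxes contained in $S(I_\beta)$, so $\sum_{\gamma}\widehat\sigma(\gamma)\le\widehat\sigma(\beta)$, while each such term satisfies $\widehat\sigma(\gamma)\le\theta^{\,n}\widehat\sigma(\beta)$. Hence
\[
\sum_{\gamma\ge\beta}\widehat\sigma(\gamma)^2
=\sum_{n\ge0}\ \sum_{\gamma}\widehat\sigma(\gamma)^2
\le\sum_{n\ge0}\theta^{\,n}\,\widehat\sigma(\beta)\cdot\widehat\sigma(\beta)
=\frac{\widehat\sigma(\beta)^2}{1-\theta}.
\]
Finiteness of $\sigma$ now gives $\widehat\sigma(\beta)^2\le\sigma(\dd)\,\widehat\sigma(\beta)$, so the tree condition holds with $C=\sigma(\dd)/(1-\theta)$, and $\sigma$ is $\D$-Carleson.

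\medskip
\noindent\textbf{Main obstacle.} I expect the crux to be the estimate in the second step: converting reverse doubling, a statement about Euclidean balls $B(z,r)$, into the geometric decay of the Carleson-box masses $\widehat\sigma$. Carleson boxes reach all the way to $\tt$, where there is no room to enlarge a ball inward, and a ball grown to the parent scale tends to leak into the sibling boxes; reconciling the ball geometry with the dyadic cone geometry, pinning down the comparability constants so that the gain $(1+\delta)$ is not absorbed, and choosing centres and radii so that reverse doubling actually applies at the relevant sub-diameter scales, is where the argument must be carried out with care. This is presumably exactly the point at which the Dirichlet-specific equivalent condition and the dyadic bookkeeping earn their keep; once the parent--child decay is secured, reverse doubling and finiteness finish the proof with room to spare.
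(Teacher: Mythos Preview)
Your proposal is correct but takes a genuinely different route from the paper. You invoke the Arcozzi--Rochberg--Sawyer tree characterization of $\D$-Carleson measures from \cite{ARS2002} and verify the quadratic tree condition $\sum_{\gamma\ge\beta}\widehat\sigma(\gamma)^2\le C\,\widehat\sigma(\beta)$ by a direct summation. The paper does \emph{not} use this characterization. Instead it proves a new equivalent condition (Lemma~\ref{T:K Carleson = Dirichlet}): $\sigma$ is $\D$-Carleson if and only if the operator $K_1f(z)=\int_\dd f(w)(1-z\bar w)^{-1}\,dA(w)$ is bounded from $L^2(\dd)$ into $L^2(\dd,\sigma)$. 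It then establishes this two-weight bound via dyadic discretization of $K_1$ and an off-diagonal Carleson embedding theorem (Lemmas~\ref{L:twp reverse doubling}--\ref{L:twp reverse doubling d}), applied with $p=q=2$, $\mu\equiv 1$, $\nu=\sigma$; the testing condition there reduces to $\sup_I|Q_I|_\sigma^{1/2}<\infty$, which is exactly the finiteness of $\sigma$. Your argument is shorter and more elementary for the stated theorem, since it rests on an existing characterization; the paper's argument builds transferable machinery---the $K_1$ criterion and the general $(p,q)$ two-weight estimate for $K_\alpha$---that reaches beyond the $p=q=2$ Dirichlet setting.

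One clarification regarding your ``main obstacle'': in this paper reverse doubling is \emph{defined} (Definition~\ref{D:R}) by the Carleson-box inequality $|B_I|_\sigma<\delta\,|Q_I|_\sigma$, not by any condition on Euclidean balls $B(z,r)$. Since $B_I$ is exactly the disjoint union of the two child boxes of $Q_I$, the parent--child decay $\widehat\sigma(\gamma')\le\delta\,\widehat\sigma(\gamma)$ is immediate from the hypothesis, with no ball-to-box conversion needed. With that observation your summation step goes through verbatim and yields $C=\sigma(\dd)/(1-\delta)$.
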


\noindent Then \cite{FW2014} (Lemma 6) implies the claimed result for doubling measures. For the definition of reverse doubling, see  Definition \ref{D:R}. Our arguments can actually prove that  a  larger class of measures are $\D$-Carleson. But stating the result in its full strength will involve contrive technicality, hence undesirable, although our proof might attract some  to probe. Our approach to $\D$-Carleson measures is indeed quite different from the past.


\section{The Proof}
First we give a general characterization of Carleson measures on a reproducing kernel Hilbert space in terms of the reproducing kernel. This is a small but necessary extension of a  result of \cite{ARS}.

\begin{lemma}\label{T:Carleson,real kernel,kernel}
Let $\mu$ be a positive measure on $\dd.$ Suppose that $H$ is a Hilbert space of analytic functions over $\dd$ with a {reproducing kernel $k(z, w)$, such that for any $z\in\dd$, the function $k(z,\cdot)$ is continuous on $\overline{\dd}$.}  Let
\begin{equation*}\label{E:TKmu}
T_{k,\mu}f(z)=\int_{\dd}f(w)k(z,w)d\mu(w),\ \ \ f\in L^2(\dd, \mu),
\end{equation*}
and
\begin{equation*}\label{E:TRKmu}
T_{Re(k),\mu}f(z)=\int_{\dd}f(w)Re(k)(z,w)d\mu(w),\ \ \ f\in L^2(\dd, \mu),
\end{equation*}
 where $Re(k)$ denotes  the real part of $k.$
Then the following conditions are equivalent:
\begin{itemize}
\item[(a)] $T_{Re(k),\mu}: L^2(\dd, \mu) \to L^2(\dd, \mu)$ is bounded.
\item[(b)] $T_{k,\mu}: L^2(\dd, \mu) \to L^2(\dd, \mu)$ is bounded.
\item[(c)] $\mu$ is an $H$-Carleson measure. That is, there exists  a constant $c$ such that
$$
\|f\|_{L^2(\dd, \mu)}\leq c\|f\|_{H} \quad \text{for all}  \quad f\in H.$$
\end{itemize}
Moreover, in the above cases, we have $$\|T_{Re(k),\mu}\|_{L^2(\dd,\mu)\to L^2(\dd,\mu)}\leq\|T_{k,\mu}\|_{L^2(\dd,\mu)\to L^2(\dd,\mu)}\leq 2 \|T_{Re(k),\mu}\|_{L^2(\dd,\mu)\to L^2(\dd,\mu)}.$$
\end{lemma}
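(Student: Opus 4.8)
The plan is to treat $T_{k,\mu}$ as the abstract composition $JJ^*$, where $J\colon H\to L^2(\dd,\mu)$ is the inclusion, and to reduce everything to the positivity of this operator. First I would record the computation that for (suitable) $f\in L^2(\dd,\mu)$,
\[
\inner{T_{k,\mu}f}{f}_{L^2(\dd,\mu)}=\Bigl\|\int_\dd f(w)k(\cdot,w)\,d\mu(w)\Bigr\|_H^2\ge 0,
\]
which follows by expanding the right-hand side and using the reproducing identity $\inner{k(\cdot,w)}{k(\cdot,z)}_H=k(z,w)$. This shows at once that $T_{k,\mu}$ is self-adjoint and positive, so its operator norm equals the supremum of the quadratic form $\inner{T_{k,\mu}f}{f}$ over the unit ball. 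Here the continuity of $k(z,\cdot)$ on $\overline{\dd}$ is exactly what guarantees the defining integrals converge and the form is meaningful even when $\mu$ reaches $\partial\dd$.

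Next, to compare the two operators I would introduce the conjugation $Cf=\bar f$ on $L^2(\dd,\mu)$, an antiunitary involution. Since $\re(k)(z,w)=\tfrac12\bigl(k(z,w)+\overline{k(z,w)}\bigr)$ and $\overline{k(z,w)}=k(w,z)$ by the Hermitian symmetry of the kernel, a direct check gives
\[
T_{\re(k),\mu}=\tfrac12\bigl(T_{k,\mu}+CT_{k,\mu}C\bigr),
\]
where $CT_{k,\mu}C$ is again positive with $\|CT_{k,\mu}C\|=\|T_{k,\mu}\|$. The triangle inequality then yields $\|T_{\re(k),\mu}\|\le\|T_{k,\mu}\|$, which is the left-hand estimate and gives $(b)\Rightarrow(a)$. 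For the reverse I would pass to quadratic forms: using that $C$ is antiunitary and $T_{k,\mu}\ge0$ one gets $\inner{CT_{k,\mu}Cf}{f}=\inner{T_{k,\mu}Cf}{Cf}\ge 0$, whence
\[
\inner{T_{k,\mu}f}{f}\le 2\,\inner{T_{\re(k),\mu}f}{f}\le 2\,\|T_{\re(k),\mu}\|\,\|f\|^2 .
\]
Taking the supremum over $\|f\|=1$ and invoking the positivity of $T_{k,\mu}$ gives $\|T_{k,\mu}\|\le 2\|T_{\re(k),\mu}\|$, the right-hand estimate, together with $(a)\Rightarrow(b)$.

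Finally, for $(b)\Leftrightarrow(c)$ I would use the factorization $T_{k,\mu}=JJ^*$: the reproducing property identifies the adjoint of the inclusion as $J^*g=\int_\dd g(w)k(\cdot,w)\,d\mu(w)$, so that $JJ^*=T_{k,\mu}$. The standard Hilbert-space identities $\|J\|^2=\|J^*\|^2=\|JJ^*\|$ then deliver $(c)\Leftrightarrow(b)$ along with $\|T_{k,\mu}\|=\|J\|^2$. The main obstacle, and the point where this statement genuinely extends \cite{ARS}, is to justify the factorization when $\mu$ is allowed to charge a neighborhood of $\partial\dd$: a priori $J$ is only densely defined, so I must verify that $J^*g$ really lands in $H$ and that boundedness of the integral operator transfers to the Carleson inequality. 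I expect to handle this by first taking $g$ bounded with compact support (or a finite combination of point masses, for which $J^*g$ is visibly a finite combination of kernels in $H$), using the continuity of $k(z,\cdot)$ on $\overline{\dd}$ to control the kernel integrals, and then passing to the limit through the positivity and the a priori bound supplied by $(b)$.
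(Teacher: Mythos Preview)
Your argument is correct and shares the paper's architecture: positivity of $T_{k,\mu}$ via the reproducing identity (the paper's Lemma~\ref{L: K reproducing, T_K T_RK}), the factorization $T_{k,\mu}=JJ^*$ through the inclusion $J:H\to L^2(\dd,\mu)$ for $(b)\Leftrightarrow(c)$, and a quadratic-form comparison for $(a)\Leftrightarrow(b)$. The one substantive difference is how you handle $(a)\Leftrightarrow(b)$: the paper first restricts to real-valued $f$, observes that $\langle T_{k,\mu}f,f\rangle=\langle T_{Re(k),\mu}f,f\rangle$ there, and then invokes a separate lemma (Lemma~\ref{L:bounded on real enough}) to pass from real to complex test functions, subsequently refining the constant via $T_{k,\mu}^{1/2}$; you instead package the same computation as the operator identity $T_{Re(k),\mu}=\tfrac12(T_{k,\mu}+CT_{k,\mu}C)$ with $C$ the complex conjugation, and argue directly on the full space. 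Your route is a bit more economical---it dispenses with the auxiliary real/imaginary splitting lemma and delivers the constant $2$ in one stroke---while the paper's version keeps the role of real test functions explicit, in line with how the result in \cite{ARS} was originally phrased. The density and domain caveats you flag at the end are exactly the ones the paper acknowledges (``we overlook a needed density argument to simplify the presentation''), so you are not missing anything the paper supplies.
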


\begin{proof}
The equivalence between (a) and (c) was proved in \cite{ARS}. The equivalence between $(a)$ and  $(b)$ follows from the three lemmas below.

\begin{lemma}\label{L:bounded on real enough}
Let $\mu$ be a positive measure over $\dd.$
Suppose that $T$ is a linear operator (not necessarily bounded)  from $L^2(\dd,\mu)$ to some Hilbert space $H.$ If  there exists a constant $c$ such that
$$
\|Tf\|_{H}\leq c\|f\|_{L^2(\dd, \mu)}
$$
for all real-valued functions $f$ in $L^2(\dd, \mu),$ then
$T$ is bounded. Moreover,  $\|T\|_{L^2(\dd,\mu)\to H} \le \sqrt{2} c.$
\end{lemma}
\begin{proof} The proof is elementary and we include it for completeness. For $f\in L^2(\dd, d\mu),$ decompose it into real and imaginary parts as $f=f_1+if_2.$
Then
\begin{eqnarray*}
\big|\langle Tf, Tf\rangle_{H}\big|
&\leq& \big|\langle Tf_1, Tf_1\rangle_{H}\big|+ \big|\langle Tf_2, Tf_2\rangle_{H}\big|+ 2\big|\langle Tf_1, Tf_2\rangle_{H}\big|\\
&\leq& c^2\|f_1\|_{L^2(\dd, \mu)}^2+c^2\|f_2\|_{L^2(\dd, \mu)}^2+2c^2\|f_1\|_{L^2(\dd, \mu)}\|f_2\|_{L^2(\dd, \mu)}\\
&\leq&2c^2(\|f_1\|_{L^2(\dd,\mu)}^2+\|f_2\|_{L^2(\dd,\mu)}^2)\\
&\leq & 2c^2\|f\|_{L^2(\dd, \mu)}^2.
\end{eqnarray*}
\end{proof}

\begin{lemma}\label{L:T_K positive, T_K and T_RK}
Let $\mu$ be a positive measure over $\dd$ and $k:\dd\times\dd\to \cc$ be a measurable function such that for any $z\in\dd$ the function $k(z,\cdot)$ is continuous over $\overline{\dd}$.
If $T_{k,\mu}$ is positive, not necessarily bounded, on $L^2(\dd,\mu)$, then $T_{k,\mu}$ is bounded on $L^2(\dd,\mu)$
if and only if  $T_{Re(k),\mu}$ is bounded.
When $T_{Re(k),\mu}$ is bounded, we have
$$\|T_{Re(k),\mu}\|_{L^2(\dd,\mu)\to L^2(\dd,\mu)}\leq \|T_{k,\mu}\|_{L^2(\dd,\mu)\to L^2(\dd,\mu)}\le {2} \|T_{Re(k),\mu}\|_{L^2(\dd,\mu)\to L^2(\dd,\mu)}.$$

\end{lemma}
\begin{proof}Assume that $T_{k,\mu}$ is bounded on $L^2(\dd,\mu)$.
Let $$f\in L^2(\dd,\mu)$$ and $$g\in L^2(\dd,\mu),$$ we have
\begin{equation*}\label{E: real to all}
\langle T_{Re(k),\mu}f,g\rangle_{L^2(\dd,\mu)}=\frac{1}{2}\langle T_{k,\mu}f,g\rangle_{L^2(\dd,\mu)}+\frac{1}{2}\overline{\langle T_{k,\mu}\bar{f},\bar{g}\rangle}_{L^2(\dd,\mu)}.
\end{equation*}
Thus $$\|T_{Re(k),\mu}\|_{L^2(\dd,\mu)\to L^2(\dd,\mu)}\leq \|T_{k,\mu}\|_{L^2(\dd,\mu)\to L^2(\dd,\mu)}.$$
%
%
 Let $$f\in L^2(\dd,\mu)$$ be a real-valued function. Then
\begin{equation*}\langle T_{Re(k),\mu}f,f \rangle_{L^2(\dd,\mu)}=\langle T_{k,\mu}f, f \rangle_{L^2(\dd,\mu)}.
\end{equation*}
If $T_{Re(k),\mu}$ is bounded on $L^2(\dd,\mu)$, then there is a constant $c$ such that for any real-valued function $$f\in L^2(\dd,\mu),$$ we have
$$\|T_{k,\mu}f\|_{L^2(\dd,\mu)}\leq c\|f\|_{L^2(\dd,\mu)}.$$
By Lemma \ref{L:bounded on real enough}, $T_{k,\mu}$ is bounded on $L^2(\dd,\mu)$.
So, for the rest of the proof, it is sufficient to show that $$\|T_{k,\mu}\|_{L^2(\dd,\mu)\to L^2(\dd,\mu)}\le {2} \|T_{Re(k),\mu}\|_{L^2(\dd,\mu)\to L^2(\dd,\mu)}.$$

\noindent Since $T_{k,\mu}$ is bounded and positive, we have
 \begin{eqnarray}\label{E:norm positive T_K}
\|T_{k,\mu}\|_{L^2(\dd,\mu)\to L^2(\dd,\mu)}&=&\sup\big\{\| T_{k,\mu}^{\frac{1}{2}}f\|^2_{L^2(\dd, \mu)}:  \|f\|_{L^2(\dd, \mu)} \le 1 \big\}\nonumber\\
& \leq & 2 \sup\big\{\| T_{k,\mu}^{\frac{1}{2}}f\|^2_{L^2(\dd, \mu)}: f \ \text{is real-valued and  } \|f\|_{L^2(\dd, \mu)} \le 1 \big\}\nonumber\\
&= & {2} \sup\big\{\langle T_{k,\mu}f, f\rangle_{L^2(\dd, \mu)}:  f \ \text{is real-valued and  } \|f\|_{L^2(\dd, \mu)} \le 1 \big\}\nonumber\\
&=&2 \sup\big\{\langle T_{Re(k),\mu}f, f\rangle_{L^2(\dd, \mu)}: f \ \text{is real-valued and  } \|f\|_{L^2(\dd, \mu)} \le 1 \big\}\nonumber\\
&\leq &2 \|T_{Re(k),\mu}\|_{L^2(\dd,\mu)\to L^2(\dd,\mu)}.\nonumber
\end{eqnarray}
\end{proof}


\begin{lemma}\label{L: K reproducing, T_K T_RK}
If $k(z, w)$ is a reproducing kernel for some Hilbert space $H$ of functions  over $\dd$ such that for any $z\in\dd$ the function $k(z,\cdot)$ is continuous over $\overline{\dd}$ and $T_{k,\mu}$ is defined as in Lemma \ref{T:Carleson,real kernel,kernel}  on  $L^2(\dd, \mu)$,  then $T_{k,\mu}$ is positive.
\end{lemma}

\begin{proof} For $$f\in L^2(\dd, \mu),$$
we have
\begin{eqnarray*}
\langle T_{k,\mu}f, f \rangle_{L^2(\dd, \mu)}&=&\int_{\dd}\int_{\dd}k(z,w)f(w)d\mu(w)\overline{f(z)}d\mu(z)\\
&=&\int_{\dd}\int_{\dd}\Big\langle k(\cdot,w), k(\cdot, z) \Big\rangle_H f(w)d\mu(w)\overline{f(z)}d\mu(z)\\
&=&\Big\langle\int_{\dd}k(u,w)f(w)d\mu(w), \int_{\dd}k(u,z)f(z)d\mu(z)\Big\rangle_H\\
&=&\langle T_{k,\mu}f, T_{k,\mu}f\rangle_H\\
&\geq &0.
\end{eqnarray*}
\end{proof}

\noindent The rest of the proof of Lemma \ref{T:Carleson,real kernel,kernel} follows from \cite{ARS}. For completeness, we sketch a short,  slightly different proof. We overlook a needed density argument to simplify the presentation.

\bigno \textbf{Proof of Part $(b)\Rightarrow$ \textbf{Part} $ (c)$ in Lemma \ref{T:Carleson,real kernel,kernel}. } Let $I$ be the identity  map from $H\to L^2(\dd, \mu),$ i.e.,
$$
I(f)=f,\ \ \ f\in H.
$$
Since $H$ is spanned by functions of the form $k(\cdot, w)$, ($w\in\dd$), $I$ is densely defined.  Moreover, $I$ is clearly closed, so the adjoint $I^*$ is also a densely defined, closed operator.
Formally, $I^*$ is given by
$$
I^*f(z)=\int_{\dd}f(w)k(z,w)d\mu(w).
$$
Indeed, for $f\in$ Dom$(I)$ with $I^*f\in H,$ we have,
by the reproducing property,
\begin{eqnarray*}
I^*f(z)&=&\langle I^*f, k_z\rangle_{H}\\
&=&\langle f, k_z\rangle_{L^2(\dd, \mu)}\\
&=&\int_{\dd}f(w)k(z,w)d\mu(w).
\end{eqnarray*}
Then
\begin{eqnarray}\label{E: norm I and whole T}
\|I^*f\|^2_{H}&=&\langle I^*f, I^*f\rangle_{H}\nonumber\\
&=&\Big\langle\int_{\dd}f(w)k(z,w)d\mu(w),\ \int_{\dd}f(u)k(z,u)d\mu(w)\Big\rangle
_{H}\nonumber\\
&=&\int_{\dd}\int_{\dd}k(u,w)f(w)d\mu(w)\overline{f(u)}d\mu(u)\nonumber\\
&=&\langle T_{k,\mu}f, f\rangle_{L^2(\dd, \mu)}.
\end{eqnarray}
This implies $(b)\Rightarrow (c).$

\bigno \textbf{Proof of Part $(c)\Rightarrow$ \textbf{Part} $(a)$ in Lemma \ref{T:Carleson,real kernel,kernel}. }
It suffices to consider the restriction of $T_{Re(k),\mu}$ to real-valued functions in $L^2(\dd, \mu)$ by Lemma \ref{L:bounded on real enough}. Since $$k(z,w)=\overline{k(w,z)},$$  $T_{Re(k),\mu}$ is a symmetric  operator on $L^2({\dd, \mu}).$
Condition $(c)$ means that $I^*$ is bounded, it follows from (\ref{E: norm I and whole T}) that  $$\text{Dom}\big(T_{Re(k),\mu}\big)=L^2(\dd, \mu).$$
Therefore,
$T_{Re(k),\mu}$ is bounded. 

\end{proof}

\bigno Now we come to a simple yet pivotal point in the proof. The next lemma is easily proved by direct calculation, but its discovery is  a fortunate coincidence since it requires us to explicitly factor a given kernel into the convolution of two.  This is usually impossible. We can make it work only for the standard Dirichlet space, and generalizing it to other spaces may require different ideas.

\begin{lemma}\label{T:K Carleson = Dirichlet}
Suppose that $\mu$ is a  positive measure on $\dd$. It is a  $\mathcal{D}$-Carleson measure if and only if $K_1: L^2(\dd)\to L^2(\dd, \mu)$ is bounded, where
\begin{equation*}
K_1f(z)=\int_\dd\frac{f(w)}{1-z\bar{w}}dA(w).
\end{equation*}
\end{lemma}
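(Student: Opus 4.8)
The plan is to reduce the boundedness of $K_1$ to the boundedness of an integral operator of the type $T_{k,\mu}$ from Lemma \ref{T:Carleson,real kernel,kernel}, applied not to the Dirichlet kernel itself but to a comparable kernel that factors through $K_1$. Recall that $\|\sum a_nz^n\|_\D^2=\sum_{n\ge0}\max(1,n)|a_n|^2$, so the reproducing kernel of $\D$ is $\sum_{n\ge0}(z\bar w)^n/\max(1,n)=1+\log\frac{1}{1-z\bar w}$. I would introduce the kernel
$$\tilde k(z,w)=\sum_{n\ge0}\frac{(z\bar w)^n}{n+1},$$
the reproducing kernel of the space $\tilde\D$ with norm $\sum_{n\ge0}(n+1)|a_n|^2$. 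Since $\max(1,n)\le n+1\le 2\max(1,n)$ for all $n\ge0$, the norms of $\D$ and $\tilde\D$ are equivalent, so $\mu$ is a $\D$-Carleson measure if and only if it is a $\tilde\D$-Carleson measure. Moreover $\tilde k(z,\cdot)$ equals $-\log(1-z\bar w)/(z\bar w)$ (with value $1$ at $z\bar w=0$), which for each fixed $z\in\dd$ is continuous on $\overline{\dd}$; hence Lemma \ref{T:Carleson,real kernel,kernel} applies to $\tilde\D$, and $\mu$ is $\tilde\D$-Carleson if and only if $T_{\tilde k,\mu}$ is bounded on $L^2(\dd,\mu)$.

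The heart of the argument is to exhibit $\tilde k$ as the convolution of the kernel of $K_1$ with itself. First I would compute the adjoint $K_1^*:L^2(\dd,\mu)\to L^2(\dd)$, obtaining $K_1^*g(w)=\int_\dd g(z)(1-w\bar z)^{-1}\,d\mu(z)$. Composing, $K_1K_1^*$ acts on $L^2(\dd,\mu)$ with kernel given by the $L^2(\dd)$-pairing
$$\int_\dd\frac{dA(u)}{(1-z\bar u)(1-u\bar w)}=\pi\sum_{n\ge0}\frac{(z\bar w)^n}{n+1}=\pi\,\tilde k(z,w),$$
where the middle identity is the routine but crucial calculation: expand both factors in power series and use $\int_\dd u^n\bar u^m\,dA=\frac{\pi}{n+1}\delta_{mn}$. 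This yields the operator identity $K_1K_1^*=\pi\,T_{\tilde k,\mu}$.

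With this identity the statement follows from the standard fact that a densely defined closable operator $K_1$ is bounded if and only if $K_1K_1^*$ is bounded, since $\|K_1^*g\|^2=\langle K_1K_1^*g,g\rangle$ and $K_1=(K_1^*)^*$. Chaining the equivalences gives
$$K_1\text{ bounded}\iff K_1K_1^*\text{ bounded}\iff T_{\tilde k,\mu}\text{ bounded}\iff\mu\text{ is }\tilde\D\text{-Carleson}\iff\mu\text{ is }\D\text{-Carleson},$$
which is the desired conclusion.

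I expect the main obstacle to be one of discovery rather than difficulty: one must guess that the Bergman-type kernel $(1-z\bar w)^{-1}$ is the correct ``square root'' of the Dirichlet kernel, after which the convolution identity is immediate. On the technical side, the only points needing care are the density and closability issues concealed in ``$K_1$ bounded $\iff K_1K_1^*$ bounded'' for an a priori unbounded $K_1$ (the same density argument already suppressed for Lemma \ref{T:Carleson,real kernel,kernel}), and the boundary-continuity of $\tilde k(z,\cdot)$ that legitimizes the application of that lemma to $\tilde\D$.
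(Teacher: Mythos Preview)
Your proof is correct and follows essentially the same route as the paper: compute $K_1K_1^*$, identify its kernel as $\frac{1}{z\bar w}\log\frac{1}{1-z\bar w}=\tilde k(z,w)$, and invoke Lemma~\ref{T:Carleson,real kernel,kernel}; the paper inserts the extra step of restricting $K_1$ to $L^2_a(\dd)$ via the Bergman projection (Lemma~\ref{L:K=KP Lp to X}) before forming $\widetilde K_1\widetilde K_1^*$, but this yields the same operator and kernel as your direct computation. Your explicit remark that $\tilde k$ is the reproducing kernel of an equivalent renorming $\tilde\D$ rather than of $\D$ itself is a point the paper leaves implicit when it writes $k_\D$.
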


\bigno Before the proof we observe a result to reduce $L^p(\dd),$ the domain of $K_1,$ to $L_a^p(\dd)$ via the Bergman projection $P$ for any $p>1$.  The proof  is skipped.

\begin{lemma}\label{L:K=KP Lp to X}
If $1<p<\infty$, then  $K_1f=K_1(Pf)$
for all $f\in L^p(\dd).$
\end{lemma}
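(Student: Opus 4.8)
The plan is to recognize $K_1$ as a dual pairing against an analytic, bounded kernel, and then to transfer the Bergman projection onto that kernel, where it acts as the identity. Fix $z\in\dd$ and set $g_z(w)=\frac{1}{1-\bar z w}$, so that, since $\frac{1}{1-z\bar w}=\overline{g_z(w)}$,
$$K_1f(z)=\int_\dd f(w)\,\overline{g_z(w)}\,dA(w)=\langle f,\,g_z\rangle_{L^2(\dd)}.$$
The crucial observation is that $g_z$ is analytic in $w$ and, because $|z|<1$, bounded on $\dd$ by $(1-|z|)^{-1}$; hence $g_z\in L^2_a(\dd)\cap L^\infty(\dd)$, and in particular $Pg_z=g_z$, since $P$ is the orthogonal projection onto $L^2_a(\dd)$ and reproduces bounded analytic functions.

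Granting this, the identity $K_1f=K_1(Pf)$ reduces to the assertion that $\langle Pf,\,g_z\rangle=\langle f,\,g_z\rangle$ for every $f\in L^p(\dd)$. I would deduce it from the Hermitian symmetry of the Bergman kernel $K(w,u)$: one has $\langle Pf,\,g_z\rangle=\langle f,\,Pg_z\rangle=\langle f,\,g_z\rangle$, where the first equality is Fubini's theorem applied to the iterated integral defining both sides, and the last uses $Pg_z=g_z$. This gives, for each $z\in\dd$,
$$K_1(Pf)(z)=\langle Pf,\,g_z\rangle=\langle f,\,g_z\rangle=K_1f(z),$$
which is exactly the claim.

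The only point that genuinely needs care — and which I regard as the main (if modest) obstacle — is the legitimacy of this interchange of integrals, because for $p\neq 2$ we are pairing $L^p(\dd)$ against $L^{p'}(\dd)$ rather than working inside $L^2(\dd)$. This is where I would spend the effort. For fixed $z$ the factor $g_z$ is bounded, and $\int_\dd |K(w,u)|\,dA(w)\lesssim \log\frac{1}{1-|u|}$ lies in $L^{p'}(\dd)$ for every finite $p'$; combined with $f\in L^p(\dd)$ and Hölder's inequality, this shows that the double integral $\int_\dd\int_\dd |K(w,u)|\,|f(u)|\,|g_z(w)|\,dA(u)\,dA(w)$ is finite, so Fubini applies. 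That $Pf$ lies in $L^p(\dd)$ in the first place uses the classical boundedness of the Bergman projection $P$ on $L^p(\dd)$ for $1<p<\infty$, which also guarantees that $\langle Pf,\,g_z\rangle$ is an honest $L^p$--$L^{p'}$ pairing.

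As an alternative that sidesteps the abstract language, I could simply substitute the integral formula $Pf(w)=\frac{1}{\pi}\int_\dd \frac{f(u)}{(1-w\bar u)^2}\,dA(u)$, interchange the order of integration, and evaluate the inner integral
$$\frac{1}{\pi}\int_\dd \frac{1}{(1-w\bar u)^2}\cdot\frac{1}{1-z\bar w}\,dA(w)=\frac{1}{1-z\bar u}$$
by the Bergman reproducing property applied to $g_z$; feeding this back into the $u$-integral reproduces $K_1f(z)$ verbatim. Either route reduces the lemma to the single clean fact that $g_z$ is reproduced by the Bergman kernel, with Fubini as the sole technical check.
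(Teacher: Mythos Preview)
Your argument is correct: writing $K_1f(z)=\langle f,g_z\rangle$ with $g_z(w)=(1-\bar z w)^{-1}\in L^2_a(\dd)\cap L^\infty(\dd)$ and moving $P$ across the pairing via Fubini (justified by the log-growth estimate for $\int_\dd|K(w,u)|\,dA(w)$ and H\"older) is exactly the right mechanism, and your alternative direct computation of the inner integral is equally valid. The paper itself skips the proof of this lemma entirely, so there is no approach to compare against; your write-up would fill that gap cleanly.
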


\begin{proof}[Proof of Lemma \ref{T:K Carleson = Dirichlet}]
Let $\widetilde{K}_1$ be the restriction of  $K_1$ onto $L_a^2(\dd)$. By
Lemma \ref{L:K=KP Lp to X}, $$K_1:L^2(\dd)\to L^2(\dd,\mu)$$ is bounded if and only if $$\widetilde{K}_1:L_a^2(\dd)\to L^2(\dd,\mu)$$ is bounded if and only if $$\widetilde{K}_1\widetilde{K}^*_1:L^2(\dd,\mu)\to L^2(\dd,\mu)$$ is bounded. With direct calculations,
\begin{equation*}
\widetilde{K}_1\widetilde{K}^*_1f(z)=\int_{\dd}f(w)k_{\mathcal{D}}(z,w)d\mu(w),
\end{equation*}
where
$$
k_{\mathcal{D}}(z,w)=\frac{1}{z\bar{w}}\log
\frac{1}{1-z\bar{w}}.
$$
 So Lemma  \ref{T:Carleson,real kernel,kernel} completes the proof.


\end{proof}

\bigno Now the hard work begins. Our target is  $$K_1:L^2(\dd)\to L^2(\dd,\sigma),$$ which  is a special case of the notorious two weight problem, whose full resolution seems out of reach.  But we manage to get what is sufficient to resolve our problem. Indeed our arguments can prove much more, although we  state our result only for the sleekest case (Theorem \ref{T:main0405}). On the other hand, although the techniques we use are not new in harmonic analysis,  the way they are modified (from \cite{Tolsa2014}) and applied to operator theory in this paper should be applicable to some other problems in operator theory.

\begin{definition}\label{D:R} \emph{\cite{FW2014}} A measure $\sigma$ on the unit disk has the  reverse doubling property if there is a constant $\delta<1$ such that
\begin{equation*}\label{E:reverseconstant}
\frac{|B_I|_\sigma}{|Q_I|_\sigma}<\delta,
\end{equation*}
for any interval $I\subset\tt$. Here $$Q_I=\{z\in\dd:1-|I|\leq |z|<1,\frac{z}{|z|}\in I\}$$ and
$$B_I=\{z\in\dd:1-\frac{|I|}{2}< |z|<1,\frac{z}{|z|}\in I\}.$$
\end{definition}

\bigno Starting now we present  several lemmas with  general parameters $(p, q)$ instead of mere $(2, 2)$. There are two reasons to do this. First of all, this should appeal to those interested in the (rather large) literature of generalized Dirichlet spaces and it is indeed at almost no extra cost for us.   Second, some---including us---might find   $(p, q)$ arguments   to be more illustrative and they won't really complicate the reading of any serious reader.

\begin{lemma}\label{L:twp reverse doubling}Let $\alpha>0$ and $1<p\leq q<\infty$.
Let $\mu$ and $\nu$ be weights on $\dd$ such that $\mu^{1-p'}$ and
$\nu$ have the reverse doubling property. If
\begin{equation*}\label{E:rpq}
\sup_{I\subset\tt}\frac{|Q_I|_\nu^\frac{1}{q}|Q_I|_{\mu^{1-p'}}
^{\frac{1}{p'}}}{|Q_I|^{\frac{\alpha}{2}}}<\infty,
\end{equation*}
then there exists a constant $c$ such that for  $f \in L^p(\dd, \mu)$,
\begin{equation*}\label{E:kpq}
\Big(\int_\dd |K_\alpha (f)|^q\nu(z)dA(z)\Big)^{\frac{1}{q}}
\leq c\Big(\int_\dd |f(z)|^p\mu(z)dA(z)\Big)^\frac{1}{p}.
\end{equation*}
Here $p'$ is the conjugate index, i.e., $\frac{1}{p}+\frac{1}{p'}=1$, and
\begin{equation*}
K_\alpha f(z)=\int_\dd\frac{f(w)}{(1-z\bar{w})^\alpha}dA(w).
\end{equation*}
\end{lemma}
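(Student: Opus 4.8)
The plan is to prove the inequality by a duality argument that turns it into a positive dyadic bilinear estimate on the disc, and then to sum that estimate using the reverse doubling hypothesis through a weighted Carleson embedding. Throughout write $\sigma=\mu^{1-p'}$ and $\omega=\nu$; the hypothesis is the box (Muckenhoupt-type) condition $\sigma(Q_I)^{1/p'}\,\omega(Q_I)^{1/q}\le A\,|Q_I|^{\alpha/2}$ for all arcs $I$, with $A<\infty$. First I would note $|K_\alpha f(z)|\le\int_\dd|f(w)|\,|1-z\bar w|^{-\alpha}\,dA(w)$, so it suffices to treat the positive kernel $|1-z\bar w|^{-\alpha}$ with $f\ge 0$. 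Dualizing the $L^q(\nu\,dA)$ norm and making the substitutions $f=\phi\sigma$ and $g\nu=\psi\omega$ for the dual function $g$, a short computation gives $\|f\|_{L^p(\mu)}=\|\phi\|_{L^p(\sigma)}$ and $\|g\|_{L^{q'}(\nu)}=\|\psi\|_{L^{q'}(\omega)}$, so the claim reduces to the symmetric estimate
\[ \Lambda:=\int_\dd\int_\dd\frac{\phi(w)\sigma(w)\,\psi(z)\omega(z)}{|1-z\bar w|^\alpha}\,dA(w)\,dA(z)\ \lesssim\ \|\phi\|_{L^p(\sigma)}\,\|\psi\|_{L^{q'}(\omega)} \]
for $\phi,\psi\ge 0$.

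Next I would discretize $\Lambda$ by the dyadic structure on $\dd$. Using the comparison $|1-z\bar w|\approx(1-|z|)+(1-|w|)+|\arg(z\bar w)|$, the Whitney pieces $W_I:=Q_I\setminus B_I$ tile $\dd$, and for $z\in W_I$, $w\in W_J$ one has $|1-z\bar w|\approx|K|$, where $K$ is the smallest dyadic arc (equivalently, the smallest box $Q_K$) containing both $I$ and $J$. Grouping the double integral by this join $K$, relaxing the constraint $I\vee J=K$ to $I,J\subseteq K$ (which overcounts each pair by the convergent factor $\sum_{K\supseteq K_0}|Q_K|^{-\alpha/2}\lesssim|Q_{K_0}|^{-\alpha/2}$, where $\alpha>0$ is used), and summing $\int_{W_I}\le\int_{Q_K}$, I obtain
\[ \Lambda\ \lesssim\ \sum_{K}\frac{1}{|Q_K|^{\alpha/2}}\Big(\int_{Q_K}\phi\,\sigma\,dA\Big)\Big(\int_{Q_K}\psi\,\omega\,dA\Big), \]
the sum over all dyadic arcs $K$. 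Writing $\langle\phi\rangle^\sigma_K=\sigma(Q_K)^{-1}\int_{Q_K}\phi\,\sigma$ and likewise for $\psi$, and inserting the box condition as $|Q_K|^{-\alpha/2}\le A\,\sigma(Q_K)^{-1/p'}\omega(Q_K)^{-1/q}$, the right side is at most
\[ A\sum_K\sigma(Q_K)^{1/p}\langle\phi\rangle^\sigma_K\cdot\omega(Q_K)^{1/q'}\langle\psi\rangle^\omega_K. \]

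I would finish by Hölder in $K$ with exponents $(p,p')$, bounding the last sum by
\[ A\Big(\sum_K\sigma(Q_K)\,(\langle\phi\rangle^\sigma_K)^p\Big)^{1/p}\Big(\sum_K\omega(Q_K)^{p'/q'}(\langle\psi\rangle^\omega_K)^{p'}\Big)^{1/p'}. \]
The decisive point is that reverse doubling is exactly the Carleson-sequence condition needed here. Since $B_K$ is, up to null sets, the union of the two child boxes, reverse doubling of $\sigma$ gives $\sum_{K\subseteq K_0,\ \mathrm{gen}=\mathrm{gen}(K_0)+j}\sigma(Q_K)<\delta^j\sigma(Q_{K_0})$, hence $\sum_{K\subseteq K_0}\sigma(Q_K)\le(1-\delta)^{-1}\sigma(Q_{K_0})$; that is, $\{\sigma(Q_K)\}$ is a $\sigma$-Carleson sequence, and the weighted dyadic Carleson embedding theorem yields $\sum_K\sigma(Q_K)(\langle\phi\rangle^\sigma_K)^p\lesssim\|\phi\|_{L^p(\sigma)}^p$. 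The same applied to $\nu=\omega$ gives $\sum_K\omega(Q_K)(\langle\psi\rangle^\omega_K)^{q'}\lesssim\|\psi\|_{L^{q'}(\omega)}^{q'}$; since $p\le q$ forces $r:=p'/q'\ge 1$, the elementary inclusion $\ell^1\subseteq\ell^r$ converts this into $\sum_K\omega(Q_K)^{p'/q'}(\langle\psi\rangle^\omega_K)^{p'}=\sum_K\big(\omega(Q_K)(\langle\psi\rangle^\omega_K)^{q'}\big)^{r}\lesssim\|\psi\|_{L^{q'}(\omega)}^{p'}$. Combining these two bounds proves the displayed estimate for $\Lambda$, and undoing the substitutions gives the lemma.

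The main obstacle is the passage from the continuous kernel to the clean dyadic sum over boxes, namely the kernel comparison together with the bookkeeping of joins and the overcounting estimate, adapted from the dyadic lattice techniques of \cite{Tolsa2014}; this is the step that must be handled carefully, possibly using a fixed finite family of shifted dyadic grids so that every relevant scale is genuinely captured. By contrast, the analytic heart is conceptually simple once set up: reverse doubling of $\sigma$ and $\nu$ is precisely what makes $\{\sigma(Q_K)\}$ and $\{\nu(Q_K)\}$ Carleson, so the merely necessary box condition becomes sufficient via Carleson embedding, with $p\le q$ absorbed by the trivial inclusion $\ell^1\subseteq\ell^{p'/q'}$.
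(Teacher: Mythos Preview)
Your proposal is correct and follows the same overall strategy as the paper: dualize, dominate the positive kernel by a dyadic bilinear sum over Carleson boxes, insert the box condition, apply H\"older, and close with Carleson embeddings driven by reverse doubling. The paper packages the discretization exactly as you anticipate in your final paragraph: it uses two shifted dyadic grids $\mathcal{D}^0,\mathcal{D}^{1/3}$ and the pointwise domination $|1-z\bar w|^{-\alpha}\le c\,\chi_{Q_L}(z)\chi_{Q_L}(w)\,|Q_L|^{-\alpha/2}$ for some $L\in\mathcal{D}^0\cup\mathcal{D}^{1/3}$, thereby reducing to the positive dyadic operators $K_\alpha^\beta$, rather than your Whitney/join bookkeeping; as you correctly note, a single grid does not give the two-sided comparison $|1-z\bar w|\approx|K|$, so the shifted-grid fix you mention is indeed required and is precisely what the paper does.

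The one substantive difference is in the H\"older split and the embedding used. The paper applies H\"older with exponents $(q,q')$, obtaining the factors $\big(\sum_K\sigma(Q_K)^{q/p}(\langle\phi\rangle^\sigma_K)^q\big)^{1/q}$ and $\big(\sum_K\omega(Q_K)(\langle\psi\rangle^\omega_K)^{q'}\big)^{1/q'}$; the first is handled by an \emph{off-diagonal} $p\to q$ Carleson embedding (their Lemma~\ref{T:pqCarlesonembedding}), which they prove via Marcinkiewicz interpolation between the trivial $L^\infty$ bound and a weak-type $(1,t)$ bound. You instead split with $(p,p')$, use only \emph{diagonal} embeddings for both $\sigma$ and $\omega$, and absorb the off-diagonal gap $p\le q$ through the elementary inclusion $\ell^1\hookrightarrow\ell^{p'/q'}$ applied to the nonnegative sequence $\omega(Q_K)(\langle\psi\rangle^\omega_K)^{q'}$. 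Your route is genuinely more elementary---it avoids interpolation entirely---while the paper's route isolates a stand-alone $p$--$q$ embedding lemma that may be of independent interest.
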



\bigno Let $$\zz_+=\nn \cup \{0\}.$$ Consider the following two dyadic grids on $\tt$,
\begin{equation*}
\mathcal{D}^0=\bigg\{\Big[\frac{2\pi m}{2^j},\frac{2\pi (m+1)}{2^j}\Big):m\in\mathbb{Z}_+, j\in\mathbb{Z}_+, 0\leq m<2^j\bigg\}
\end{equation*}
and
\begin{equation*}
\mathcal{D}^{\frac{1}{3}}=\bigg\{\Big[\frac{2\pi m}{2^j}+\frac{2\pi}{3},\frac{2\pi (m+1)}{2^j}+\frac{2\pi}{3}\Big):m\in\mathbb{Z}_+, j\in\mathbb{Z}_+, 0\leq m<2^j\bigg\}.
\end{equation*}
For each $\beta\in\{0, \frac{1}{3}\}$, let $\mathcal{Q}^\beta$ denote the collection of Carleson boxes $Q_I$ with $I\in\mathcal{D}^\beta$ and we call $\mathcal{Q}^\beta$  a Carleson box system over $\dd$.

\bigno The first appearance of shifted dyadic grids in print is probably in page 30 of \cite{Christ}. A quick way to appreciate why  shifted dyadic grids are powerful is to look at \cite{APR2013}, \cite{GJ1982} and \cite{Mei2003}. In particular, \cite{APR2013} contains a nice application to Sarason's problem on Toeplitz products.

\begin{lemma}\label{L:dyadic}\emph{\cite{Mei2003}}
Let $J\subset\tt$ be an interval. Then there exists an interval $$L \in\mathcal{D}^0\cup\mathcal{D}^\frac{1}{3}$$ such that
$$J\subset L \text{ and } |L|\leq 6|J|.$$
\end{lemma}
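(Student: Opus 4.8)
The plan is to establish the classical ``one-third trick'': the shift by $2\pi/3$ is engineered so that the endpoints of the two grids $\mathcal{D}^0$ and $\mathcal{D}^{\frac{1}{3}}$ never coincide at any dyadic scale, and in fact stay uniformly separated. Consequently, an arbitrary interval $J$ can always avoid straddling the endpoints of at least one of the two grids, and the containing grid interval will have length comparable to $|J|$.

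First I would fix the scale. Write $\ell_j = 2\pi/2^j$ for the common length of the intervals of $\mathcal{D}^0$ and $\mathcal{D}^{\frac{1}{3}}$ at level $j$. For $J$ with $|J|\le \pi/3$, choose the unique integer $j\ge 0$ with $3|J| < \ell_j \le 6|J|$; such a $j$ exists and is unique because the numbers $\ell_j$ decrease by a factor of $2$ while the half-open window $(3|J|, 6|J|]$ has ratio exactly $2$. When $|J| > \pi/3$ one simply takes $L=\tt$, the level-$0$ box, since then $|L| = 2\pi \le 6|J|$ and $J\subset L$ trivially. The intended output is a level-$j$ interval $L$, so that $|L| = \ell_j \le 6|J|$ holds automatically by the choice of scale.

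Next comes the decisive number-theoretic point. Let $E_0$ and $E_{1/3}$ denote the endpoint sets of $\mathcal{D}^0$ and $\mathcal{D}^{\frac{1}{3}}$ at level $j$, so that $E_{1/3} = E_0 + 2\pi/3$. Writing $2^j = 3k + r$ with $r\in\{1,2\}$ (here one uses that $2^j$ is never divisible by $3$, which is precisely why the shift $2\pi/3$ remains effective at every scale), one gets $2\pi/3 = (k + r/3)\ell_j$, so modulo $\ell_j$ the shift equals $\ell_j/3$ or $2\ell_j/3$. It follows that the minimal distance on $\tt$ between a point of $E_0$ and a point of $E_{1/3}$ is exactly $\ell_j/3$.

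Finally I would run the dichotomy. If $J\cap E_0 = \emptyset$, then $J$ lies strictly between two consecutive points of $E_0$, hence inside a single interval of $\mathcal{D}^0$ at level $j$, and we are done. Otherwise $J$ contains some $x\in E_0$; since $|J| < \ell_j/3$, the separation just established forbids $J$ from also meeting $E_{1/3}$, so $J$ lies inside a single interval of $\mathcal{D}^{\frac{1}{3}}$. In either case the containing interval $L$ satisfies $J\subset L$ and $|L| = \ell_j \le 6|J|$. I expect the main obstacle to be the separation estimate of the third step: everything hinges on powers of two avoiding the residue $0$ modulo $3$, which keeps the two grids interleaved with a gap proportional to the scale; the only remaining care is the harmless edge case of intervals $J$ comparable to the whole circle, handled by the level-$0$ box.
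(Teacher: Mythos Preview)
Your argument is correct and is precisely the standard ``one-third trick'' proof: choose the dyadic scale $\ell_j$ so that $3|J|<\ell_j\le 6|J|$, use that $2^j\not\equiv 0\pmod 3$ to get a uniform $\ell_j/3$ separation between the two endpoint sets, and then run the dichotomy. The paper does not supply its own proof of this lemma; it merely quotes the result from \cite{Mei2003}, so there is nothing to compare on the paper's side beyond noting that your proof is the argument one finds in that reference.
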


\bigno The proof of the next   lemma will be  skipped.

\begin{lemma}\label{L:Key01} There is a positive constant $c$ such that for any $z, w \in \dd$, there exists  a Carleson box $Q_I$ such that $z,w\in Q_I$ and
\begin{equation*}\frac{1}{c}|Q_I|^{\frac{1}{2}}\leq |1-z\bar{w}|\leq c|Q_I|^{\frac{1}{2}}.
\end{equation*}
\end{lemma}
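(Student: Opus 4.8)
The plan is to reduce the statement to the elementary two-sided estimate
\begin{equation*}
|1-z\bar w|\;\asymp\;(1-r_1)+(1-r_2)+|\phi|,
\end{equation*}
valid with absolute constants, where I write $z=r_1 e^{i\theta_1}$, $w=r_2 e^{i\theta_2}$ and let $\phi\in(-\pi,\pi]$ be the representative of $\theta_1-\theta_2$, so that $|\phi|$ is the arc distance between the radial projections $z/|z|$ and $w/|w|$. This is a routine consequence of the identity
\begin{equation*}
|1-z\bar w|^2=(1-r_1 r_2)^2+4 r_1 r_2\sin^2\tfrac{\phi}{2},
\end{equation*}
combined with $\sin^2(\phi/2)\asymp\phi^2$ on $|\phi|\le\pi$, with $\sqrt{a^2+b^2}\asymp a+b$ for $a,b\ge0$, and with $1-r_1 r_2\asymp(1-r_1)+(1-r_2)$ (the upper side from $(1-r_1)(1-r_2)\ge0$, the lower side from $r_1 r_2\le\tfrac12(r_1+r_2)$); the regime $r_1 r_2\le\tfrac12$, where every quantity is comparable to an absolute constant, is disposed of separately. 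For the construction I will use only the one-sided consequences $|1-z\bar w|\ge\max(1-r_1,1-r_2)$ (immediate from $|1-z\bar w|\ge 1-|z\bar w|$), $|1-z\bar w|\ge\tfrac{1}{2\pi}|\phi|$, and $|1-z\bar w|\le(1-r_1)+(1-r_2)+|\phi|$.

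First I would produce one arc of the correct scale and then promote it to a dyadic interval. Put $d=|1-z\bar w|$ and $\ell=\max(|\phi|,1-r_1,1-r_2)$, and let $J\subset\tt$ be the arc of length $2\ell$ centered at $z/|z|$. Since $|\phi|\le\ell$, the arc $J$ contains both $z/|z|$ and $w/|w|$, and the one-sided bounds above give $\ell\asymp d$, hence $|J|\asymp d$. Applying Lemma \ref{L:dyadic} to $J$ furnishes an interval $L\in\mathcal{D}^0\cup\mathcal{D}^{\frac{1}{3}}$ with $J\subset L$ and $|L|\le 6|J|$, whence $|J|\le|L|\le 6|J|$ and therefore $|L|\asymp d$. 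This is precisely the step where the two shifted grids are indispensable: inside a single dyadic grid the arc $J$ may straddle a grid boundary, so that no dyadic interval of length comparable to $|J|$ contains it, and it is Lemma \ref{L:dyadic} that makes a single box of the correct scale available.

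It then remains to verify $z,w\in Q_L$ and to compare $|Q_L|$ with $|L|$. The angular condition is free, since $z/|z|,\,w/|w|\in J\subset L$. For the radial condition I exploit the factor $2$ built into $|J|=2\ell$: from $|L|\ge|J|=2\ell\ge 2(1-r_1)\ge 1-r_1$, and likewise $|L|\ge 1-r_2$, one gets $1-|L|\le r_i$, so both points lie in $Q_L$. A direct computation in polar coordinates gives $|Q_L|=|L|^2(1-|L|/2)$ when $|L|\le 1$ and $|Q_L|=|L|/2$ when $|L|>1$, so in every case $|Q_L|^{1/2}\asymp|L|\asymp d$; in the range $|L|>1$ one simply notes that $|L|\asymp d$ forces $d\asymp 1$, so both sides are comparable to absolute constants. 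Taking $I=L$ then completes the argument.

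The hard part is not conceptual but lies in the bookkeeping: one must ensure that a single bound on the length $|L|$ simultaneously forces the angular inclusion and both radial inclusions, which is exactly why $J$ is taken of length $2\ell$ rather than $\ell$, and one must keep the comparability constants honest through Lemma \ref{L:dyadic}. The remaining care is for degenerate configurations. When $z=0$ or $w=0$ the radial projection is undefined while $d\asymp 1$; there I would take $I=[0,2\pi)\in\mathcal{D}^0$, for which $Q_I$ is the whole disk and $|Q_I|^{1/2}\asymp 1\asymp d$, with the origin included under the convention that its vacuous angular condition is met. I expect the comparability estimate of the first paragraph to be the one a careful reader will want spelled out, though it is entirely elementary.
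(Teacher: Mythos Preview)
The paper explicitly skips the proof of this lemma, so there is no reference argument to compare against. Your proposal is correct: the identity $|1-z\bar w|^2=(1-r_1r_2)^2+4r_1r_2\sin^2(\phi/2)$ yields the two-sided estimate $|1-z\bar w|\asymp(1-r_1)+(1-r_2)+|\phi|$, and from that the construction of a Carleson box containing both points with side comparable to $|1-z\bar w|$ is routine; your handling of the radial inclusion via the factor $2$ in $|J|=2\ell$ and of the degenerate cases $|L|>1$ and $z=0$ is clean.

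One point of structure is worth noting. The lemma as stated asks only for \emph{some} interval $I\subset\tt$, not a dyadic one; your arc $J$ (centered at $z/|z|$, of length $2\ell$) already does the job, and you could take $I=J$ directly without invoking Lemma~\ref{L:dyadic}. In the paper's logical flow, Lemma~\ref{L:Key01} supplies an arbitrary Carleson box, and only afterwards is Lemma~\ref{L:dyadic} applied to upgrade to a dyadic box in $\mathcal D^0\cup\mathcal D^{1/3}$, producing inequality~(\ref{E:kernelestimate}). What you have written is in effect a direct proof of~(\ref{E:kernelestimate}) that merges the two lemmas; this is perfectly fine and arguably more efficient, but it means your remark that ``the two shifted grids are indispensable'' applies to the combined statement, not to Lemma~\ref{L:Key01} itself.
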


\bigno By Lemma \ref{L:dyadic} and Lemma \ref{L:Key01}, there is a constant $c$ such that for any $z,w\in\dd$, we can find an $$L \in\mathcal{D}^0\cup\mathcal{D}^\frac{1}{3}$$ such that
\begin{equation}\label{E:kernelestimate}
\frac{1}{|1-z\bar{w}|^\alpha}\leq
c\frac{\chi_{Q_L}(z)\chi_{Q_L}(w)}{|Q_{L}|^\frac{\alpha}{2}}.
\end{equation}
\bigno For each $\beta\in\{0,\frac{1}{3}\}$, we define
\begin{equation*}
K^\beta_\alpha f(z)=\sum_{I\in\mathcal{D}^\beta}
\int_{\dd}\frac{f(w)\chi_{Q_I}(w)}{|Q_I|^\frac{\alpha}{2}}dA(w)\chi_{Q_I}(z).
\end{equation*}

\noindent By (\ref{E:kernelestimate}), for any positive function $f$ on $\dd$,  we have
\begin{equation*}
|K_\alpha f(z)|\leq c(K^0_\alpha f(z)+K^\frac{1}{3}_\alpha f(z))
\end{equation*}
for any $z \in \dd$. It easily follows that
\begin{lemma}\label{L:Form K to Dyadic}
Let $\sigma$ and $\omega$ be two weights on $\dd$ and $1\leq p, q\leq\infty$. If both
$K^0_\alpha$ and $K^{\frac{1}{3}}_\alpha$ are bounded from $L^p(\dd,\sigma)$ into $L^q(\dd,\omega)$,
then $$K_\alpha:L^p(\dd,\sigma)\to L^q(\dd,\omega)$$ is  bounded.
\end{lemma}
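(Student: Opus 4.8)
The plan is to lean entirely on the pointwise kernel domination recorded just above the statement, which carries all the content; everything else is a two-line application of the triangle inequality, which is why the paper says the lemma ``easily follows.'' First I would reduce to nonnegative integrands. For an arbitrary $f \in L^p(\dd,\sigma)$ the kernel of $K_\alpha$ has modulus $|1-z\bar w|^{-\alpha}$, so
$$
|K_\alpha f(z)| \le \int_\dd \frac{|f(w)|}{|1-z\bar{w}|^\alpha}\, dA(w).
$$
This reduction is precisely what licenses the use of the pointwise estimate stated before the lemma, which was formulated only for positive functions: applying it to the nonnegative function $|f|$ gives
$$
|K_\alpha f(z)| \le c\big(K^0_\alpha |f|(z) + K^{\frac{1}{3}}_\alpha |f|(z)\big), \qquad z \in \dd,
$$
where the estimate in turn rests on the kernel decomposition (\ref{E:kernelestimate}) coming from Lemma \ref{L:dyadic} and Lemma \ref{L:Key01}.

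The second step is to pass to norms. Since $1 \le q \le \infty$, Minkowski's inequality in $L^q(\dd,\omega)$ yields
$$
\|K_\alpha f\|_{L^q(\dd,\omega)} \le c\big(\|K^0_\alpha |f|\|_{L^q(\dd,\omega)} + \|K^{\frac{1}{3}}_\alpha |f|\|_{L^q(\dd,\omega)}\big).
$$
Now I would invoke the hypotheses: each of $K^0_\alpha$ and $K^{\frac{1}{3}}_\alpha$ is bounded from $L^p(\dd,\sigma)$ into $L^q(\dd,\omega)$, with operator norms $C_0$ and $C_{1/3}$ say. Because $\big\| |f| \big\|_{L^p(\dd,\sigma)} = \|f\|_{L^p(\dd,\sigma)}$, the right-hand side is at most $c(C_0 + C_{1/3})\|f\|_{L^p(\dd,\sigma)}$, which is exactly the asserted boundedness of $K_\alpha$.

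There is no serious obstacle here; the single point to keep honest is the passage from $f$ to $|f|$, so that the dyadic operators $K^\beta_\alpha$ (whose kernels are nonnegative and hence act monotonically on nonnegative inputs) are fed a nonnegative function to which the pre-established pointwise bound genuinely applies. The real work sits upstream in the derivation of (\ref{E:kernelestimate}); once that decomposition of $|1-z\bar w|^{-\alpha}$ across the two shifted Carleson box systems is available, this lemma is immediate and merely transfers the two one-system bounds to the full operator.
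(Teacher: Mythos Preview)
Your proposal is correct and follows exactly the approach the paper intends: the paper simply records the pointwise domination $|K_\alpha f(z)|\le c\big(K^0_\alpha f(z)+K^{1/3}_\alpha f(z)\big)$ for positive $f$ and then states that the lemma ``easily follows,'' which is precisely your reduction to $|f|$ plus the triangle inequality in $L^q(\dd,\omega)$.
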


\bigno Now we come to another key technical point. Namely, we prove an off-diagonal Carleson embedding theorem  over the unit disk.  A good entry point to this area of techniques is \cite{Tolsa2014} which contains various Carleson embedding theorems of diagonal type over the Euclidean space $\rr^n$.
For operator theorists, it is probably desirable in general to see how to adopt those (rich) techniques on $\rr^n$ to the study of analytic function spaces on the unit disk $\dd$.

\bigno
\noindent Let $\sigma$ be a weight on $\dd$ and $\beta\in\{0,\frac{1}{3}\}$.
Let $\mathcal{Q}^{\beta}$ be a dyadic Carleson system over $\dd$. For any $$Q_I\in\mathcal{Q}^\beta,$$
let \begin{equation*}
\ee_{Q_I}^\sigma f=\frac{1}{|Q_I|_\sigma}\int_{Q_I}f(z)\sigma(z)dA(z).
\end{equation*}
Then a tree mapping on the dyadic Carleson system $\mathcal{Q}^\beta$ is given by
\begin{equation*}
\Lambda_\beta: f\mapsto \ee_{Q_I}^\sigma f
\end{equation*}
for $f\in L^1(\dd,\sigma)$.

\bigno Next, we endow $$\mathcal{Q}^\beta, \beta\in\{0,\frac{1}{3}\},$$ with a measurable space structure. 
For any $t\in \rr$ and
$Q_I\in\mathcal{Q}^\beta$, let $$a_t(Q_I)=|Q_I|_\sigma^{t}.$$ Then $\{\mathcal{Q}^\beta, a_t(\cdot)\}$
is a measurable space.
Furthermore, for $1\leq p<\infty$, $$f\in L^p(\mathcal{Q}^\beta,a_t(\cdot))$$
means
\begin{equation*}
\norm{f}_{L^p(\mathcal{Q}^\beta,a_t(\cdot))}
=\Big(\sum_{Q_I\in\mathcal{Q}^\beta}a_t(Q_I)|f(Q_I)|^p\Big)^\frac{1}{p}<\infty.
\end{equation*}
Moreover, $f \in L^{p,\infty}(\mathcal{Q}^\beta,a_t(\cdot))$  {means}
\begin{equation*}
\norm{f}_{L^{p,\infty}(\mathcal{Q}^\beta,a_t(\cdot))}=
\sup_{\lambda>0}\bigg\{\lambda\bigg[\sum_{Q_I\in \mathcal{Q}_\lambda}a_t(Q_I)\bigg]^{\frac{1}{p}}\bigg\}<\infty,
\end{equation*}
where $$\mathcal{Q}_\lambda=\{Q_I\in\mathcal{Q}^\beta:
|f(Q_I)|>\lambda\}.$$


\begin{lemma}\label{T:pqCarlesonembedding}Let $1<p\leq q<\infty$, $t=\frac{q}{p}$ and $\sigma$ be a weight on $\dd$. Let $\mathcal{Q}^{\beta}$ be a dyadic Carleson system on $\dd$, $\beta \in \{0,\frac{1}{3}\}$. Let $$\Lambda_\beta: f\mapsto \ee_{Q_I}^\sigma f$$
be the tree mapping on $\mathcal{Q}^\beta$. If there is a constant $c_1$ such that
for any $K \in \mathcal{D}^\beta$,
\begin{equation*}
\sum_{Q_I\in\mathcal{Q}^\beta:Q_I\subset Q_K}|Q_I|_\sigma^t\leq c_1
|Q_K|_\sigma^t,
\end{equation*}
then the tree mapping $\Lambda_\beta$ is bounded from $L^p(\dd,\sigma)$ into $L^q(\mathcal{Q}^\beta, a_t(\cdot))$. That is, there is a constant $c_2$ such that
\begin{equation*}
\Big(\sum_{Q_I\in\mathcal{Q}^\beta}|Q_I|_\sigma^t(\ee^\sigma_{Q_I}f)^q\Big)^{\frac{1}{q}}\leq c_2
\Big(\int_\dd |f(z)|^p\sigma(z)dA(z)\Big)^\frac{1}{p}, \quad f \in L^p(\dd, \sigma).
\end{equation*}
\end{lemma}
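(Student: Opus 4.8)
The plan is to run a stopping-time argument on the Carleson box tree, dominating the tree sum by a $\sigma$-weighted dyadic maximal function and then converting the gap $q\ge p$ into a single application of the elementary inequality $\sum a_i^t\le(\sum_i a_i)^t$, valid because $t=\frac{q}{p}\ge 1$. Since $|\ee^\sigma_{Q_I}f|\le\ee^\sigma_{Q_I}|f|$ and the two norms in the statement are unchanged under $f\mapsto|f|$, I would reduce immediately to $f\ge 0$. Then I would introduce the weighted dyadic maximal function
$$M_\sigma f(z)=\sup_{z\in Q_I\in\mathcal{Q}^\beta}\ee^\sigma_{Q_I}f,\qquad z\in\dd,$$
and record that it is bounded on $L^p(\dd,\sigma)$ for every $p>1$. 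Here one must be slightly careful: the Carleson boxes do not partition one another (the two children of $Q_I$ cover only the upper annular strip of $Q_I$, missing the part with $1-|I|\le|z|<1-\tfrac{|I|}{2}$), so $\ee^\sigma_{Q_I}$ is not a genuine martingale and Doob's inequality is not directly available. What survives, and is all that is needed, is that the boxes of $\mathcal{Q}^\beta$ containing a fixed point are totally ordered by inclusion, because the intervals of $\mathcal{D}^\beta$ containing a fixed angle are nested. Consequently the maximal boxes on which $\ee^\sigma_{\cdot}f>\lambda$ are pairwise disjoint, which yields the weak-type $(1,1)$ bound $|\{M_\sigma f>\lambda\}|_\sigma\le\lambda^{-1}\|f\|_{L^1(\sigma)}$ in the usual way; interpolating with the trivial $L^\infty$ bound gives boundedness on $L^p(\sigma)$.

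Next, for each $k\in\zz$ let $\{Q_{k,j}\}_j$ be the maximal boxes of $\mathcal{Q}^\beta$ with $\ee^\sigma_{Q_{k,j}}f>2^k$. By the nesting property these are disjoint and their union is exactly $\{M_\sigma f>2^k\}$, so that $b_k:=|\{M_\sigma f>2^k\}|_\sigma=\sum_j|Q_{k,j}|_\sigma$. Assigning each box $Q_I$ to the level $k$ with $2^k<\ee^\sigma_{Q_I}f\le 2^{k+1}$ and discarding the upper bound, I obtain
$$\sum_{Q_I\in\mathcal{Q}^\beta}|Q_I|_\sigma^t\,(\ee^\sigma_{Q_I}f)^q\le 2^q\sum_k 2^{kq}\sum_{Q_I\in\mathcal{Q}_{2^k}}|Q_I|_\sigma^t,$$
where $\mathcal{Q}_{2^k}=\{Q_I\in\mathcal{Q}^\beta:\ee^\sigma_{Q_I}f>2^k\}$. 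Every $Q_I\in\mathcal{Q}_{2^k}$ has a unique maximal ancestor $Q_{k,j}$ in $\mathcal{Q}_{2^k}$ (ancestors of a box form a chain, and the top-level Carleson box terminates every ascending chain), so applying the packing hypothesis with $Q_K=Q_{k,j}$ gives $\sum_{Q_I\subset Q_{k,j}}|Q_I|_\sigma^t\le c_1|Q_{k,j}|_\sigma^t$. Grouping the inner sum by these maximal boxes bounds the right-hand side by $c_1 2^q\sum_{k,j}\big(2^{kp}|Q_{k,j}|_\sigma\big)^t$.

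This is the point where $q\ge p$ is used decisively, and where I expect the only real obstacle to lie. A naive weak-type estimate for $M_\sigma$ at this stage fails: it produces $\int_0^\infty \lambda^{q-1}|\{M_\sigma f>\lambda\}|_\sigma^{t}\,d\lambda$, whose integrand behaves like $\lambda^{-1}$ after inserting the weak $(p,p)$ bound, hence diverges. The correct move is structural rather than quantitative: since $t\ge 1$, I apply $\sum_{k,j}a_{k,j}^t\le\big(\sum_{k,j}a_{k,j}\big)^t$ over the combined index set to collapse the double sum,
$$\sum_{k,j}\big(2^{kp}|Q_{k,j}|_\sigma\big)^t\le\Big(\sum_{k,j}2^{kp}|Q_{k,j}|_\sigma\Big)^t=\Big(\sum_k 2^{kp}b_k\Big)^t .$$
Finally $\sum_k 2^{kp}b_k$ is comparable to $\|M_\sigma f\|_{L^p(\sigma)}^p$ (the standard dyadic comparison of a layer-cake integral with its values at $\lambda=2^k$), which the maximal inequality of the first paragraph bounds by a constant multiple of $\|f\|_{L^p(\sigma)}^p$. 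Raising to the power $t$ and using $pt=q$ shows the tree sum is at most a constant times $\|f\|_{L^p(\sigma)}^q$; extracting $q$-th roots produces the desired constant $c_2$. Thus the genuinely delicate input is the maximal inequality over the non-nested box family, while the crux that makes $p<q$ work—turning the $q/p$ gap into one use of $\ell^t\hookrightarrow\ell^1$—is short once the level-set bookkeeping is set up.
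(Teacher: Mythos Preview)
Your argument is correct. It shares with the paper the two decisive combinatorial moves---selecting maximal boxes in each super-level set $\{Q_I:\ee^\sigma_{Q_I}f>\lambda\}$, then invoking the packing hypothesis inside each and collapsing via $\sum a_i^{\,t}\le(\sum a_i)^t$ for $t=q/p\ge 1$---but the packaging is genuinely different. The paper reads those two moves as a single weak-type endpoint: it shows directly that $\Lambda_\beta:L^1(\dd,\sigma)\to L^{t,\infty}(\mathcal{Q}^\beta,a_t)$ is bounded (the maximal-box step gives $\sum_j|Q_{I_j}|_\sigma\le\lambda^{-1}\|f\|_{L^1(\sigma)}$, and the packing plus $\ell^t\hookrightarrow\ell^1$ step turns this into the $a_t$-measure of $\mathcal{Q}_\lambda$), pairs it with the trivial $L^\infty\to L^\infty$ bound, and quotes Marcinkiewicz once to land on $L^p\to L^q$. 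You instead push the interpolation down to the scalar maximal function $M_\sigma$ (weak $(1,1)$ plus $L^\infty$ on $L^p(\sigma)$), and then run an explicit $2^k$-level stopping-time decomposition at the target exponent $q$. The paper's route is shorter once Marcinkiewicz is taken off the shelf and avoids introducing $M_\sigma$ altogether; your route is more hands-on, makes the dependence on constants transparent, and your remark that the Carleson boxes are only a nested family (not a martingale filtration) is a point the paper leaves implicit.
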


\bigno In order to prove Lemma \ref{T:pqCarlesonembedding}, we need the Marcinkiewick interpolation theorem which we recall for the convenience of the readers.
\begin{lemma}\emph{(\cite{BL1976}, Page 6)}\label{L:marcinliewick} Let $0<p_0,q_0,p_1,q_1\leq\infty$ and $p_0\not=p_1$.
Let $(X,\sigma)$ and $(Y,\omega)$ be measurable spaces. Let $T$ be a linear operator such that
\begin{itemize}
\item[(a)] $T:L^{p_0}(X,\sigma)\to L^{q_0,\infty}(Y,\omega)$ is bounded with norm $c_3$;
\item[(b)] $T:L^{p_1}(X,\sigma)\to L^{q_1,\infty}(Y,\omega)$ is bounded with norm $c_4$.
\end{itemize}
Set $$\frac{1}{p}=\frac{1-\theta}{p_0}+\frac{\theta}{p_1}$$ and $$\frac{1}{q}=\frac{1-\theta}{q_0}+\frac{\theta}{q_1}$$ for some $0\leq \theta\leq 1$.
If $p\leq q$, then
\begin{equation*}
T:L^p(X,\sigma)\to L^q(Y,\omega)
\end{equation*}
is bounded with norm $c_5=c(c_3, c_4, \theta).$
\end{lemma}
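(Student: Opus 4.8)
This is the Marcinkiewicz interpolation theorem; since it is quoted from \cite{BL1976}, I only outline the real-variable proof. Throughout I may assume $p_0<p_1$ (relabel the endpoints if necessary), so that $p_0<p<p_1$, and by homogeneity of the desired inequality I normalize $\|f\|_{L^p(X,\sigma)}=1$ and aim to bound $\|Tf\|_{L^q(Y,\omega)}$ by a constant depending only on $c_3$, $c_4$ and $\theta$. The starting point is the layer-cake representation
\begin{equation*}
\|Tf\|_{L^q(Y,\omega)}^q=q\int_0^\infty\lambda^{q-1}\,\omega\big(\{y\in Y:|Tf(y)|>\lambda\}\big)\,d\lambda,
\end{equation*}
which converts the strong $(p,q)$ bound into a quantitative control of the super-level sets of $Tf$ in terms of the two weak-type hypotheses (a) and (b).

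For each height $\lambda>0$ I split $f$ at a threshold $s(\lambda)$ into a tall part and a bounded part,
\begin{equation*}
f=g_\lambda+h_\lambda,\qquad g_\lambda=f\,\chi_{\{|f|>s(\lambda)\}},\qquad h_\lambda=f\,\chi_{\{|f|\le s(\lambda)\}},
\end{equation*}
taking $s(\lambda)=\lambda^{a}$ with the exponent $a>0$ dictated by the relations defining $p$ and $q$. Linearity of $T$ gives $\{|Tf|>\lambda\}\subset\{|Tg_\lambda|>\lambda/2\}\cup\{|Th_\lambda|>\lambda/2\}$, and the two hypotheses yield
\begin{equation*}
\omega\big(\{|Tg_\lambda|>\lambda/2\}\big)\le\Big(\frac{2c_3}{\lambda}\Big)^{q_0}\|g_\lambda\|_{L^{p_0}(X,\sigma)}^{q_0},\qquad
\omega\big(\{|Th_\lambda|>\lambda/2\}\big)\le\Big(\frac{2c_4}{\lambda}\Big)^{q_1}\|h_\lambda\|_{L^{p_1}(X,\sigma)}^{q_1}.
\end{equation*}
Because $p_0<p<p_1$, assigning the tall values of $f$ to the smaller exponent $p_0$ and the bounded values to the larger exponent $p_1$ keeps both norms finite for every $f\in L^p(X,\sigma)$.

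Substituting these estimates into the layer-cake formula splits the bound into two $\lambda$-integrals. Exchanging the order of integration by Tonelli, the inner $\lambda$-integrals become pure powers of $|f|$, and the choice of $a$ is exactly what makes the resulting exponent equal to $p$ while guaranteeing convergence at both $\lambda\to0$ and $\lambda\to\infty$ (convergence at $0$ uses that $q$ exceeds the smaller of $q_0,q_1$, and at $\infty$ that $q$ lies below the larger). The only genuinely non-trivial manipulation is that the weak estimates carry the powers $q_0/p_0$ and $q_1/p_1$ \emph{outside} the $X$-integral; to bring the $\lambda$-integration inside one invokes Minkowski's integral inequality, which requires the outer exponent $q/p$ to be at least $1$. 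This is precisely the step at which the hypothesis $p\le q$ is indispensable. Once past it, both pieces are bounded by a constant times $\|f\|_{L^p(X,\sigma)}^q=1$, which is the claim.

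I expect the main obstacle to be this bookkeeping, namely pinning down the threshold exponent $a$ from the relations for $p$ and $q$ and executing the Minkowski step uniformly, together with the degenerate and endpoint configurations. When $p_1=\infty$ or $q_1=\infty$ one of the weak bounds is replaced by the trivial $L^\infty$-bound and $s(\lambda)$ is chosen so that $Th_\lambda$ never exceeds $\lambda/2$, killing one term outright. The case $q_0=q_1$ (which forces $q=q_0=q_1$) is the most delicate: the threshold can no longer be a clean power of $\lambda$, and here $p\le q$ does the essential work of upgrading the two weak bounds to a strong one. Finally, when $q_0\neq q_1$ the same conclusion follows abstractly, which is the route of \cite{BL1976}: reading the hypotheses as $T:L^{p_i}\to L^{q_i,\infty}$ and applying the real interpolation functor $(\cdot,\cdot)_{\theta,p}$ identifies the domain as $(L^{p_0},L^{p_1})_{\theta,p}=L^p$ and the target as the Lorentz space $L^{q,p}$, whereupon the embedding $L^{q,p}\hookrightarrow L^q$ — valid exactly when $p\le q$ — finishes the proof.
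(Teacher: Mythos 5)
The paper offers no proof of this lemma at all: it is imported verbatim from \cite{BL1976}, so the only comparison available is against the standard literature. Measured that way, your mainline is sound: the real-variable outline (layer-cake formula, height-dependent splitting $f=g_\lambda+h_\lambda$ at a power threshold $s(\lambda)=\lambda^a$, Tonelli plus a Minkowski/Hardy step whose exponent condition is exactly where $p\le q$ enters) is the classical proof, and your closing alternative --- reading the hypotheses as $T:L^{p_i}\to L^{q_i,\infty}$, applying the real interpolation functor $(\cdot,\cdot)_{\theta,p}$ with $(L^{p_0},L^{p_1})_{\theta,p}=L^p$ and $(L^{q_0,\infty},L^{q_1,\infty})_{\theta,p}=L^{q,p}$, then embedding $L^{q,p}\hookrightarrow L^q$ precisely when $p\le q$ --- is indeed the route of \cite{BL1976}. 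Both work whenever $q_0\ne q_1$, which is the case the paper actually uses.

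The genuine error is your handling of the case $q_0=q_1$, where you claim that ``$p\le q$ does the essential work of upgrading the two weak bounds to a strong one.'' This is false; in fact the lemma as transcribed (with $q_0=q_1$ allowed) is false there, so no choice of threshold can rescue your sketch. Take $X=Y=(0,1)$ with Lebesgue measure, fix $q<\infty$, and set $Tf(y)=\bigl(\int_0^1 f(x)\,dx\bigr)\,y^{-1/q}$. Then $T$ is linear and, since $\|f\|_{L^1}\le\|f\|_{L^{p_i}}$ on a probability space, it is bounded from $L^{p_0}$ and from $L^{p_1}$ into $L^{q,\infty}$ for any $1\le p_0<p_1\le\infty$; yet $Tf\notin L^q$ whenever $\int_0^1 f\,dx\ne0$. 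Choosing, say, $p_0=1$, $p_1=\infty$, $\theta=\tfrac12$ (so $p=2$) and $q=10$ satisfies every hypothesis of the statement, including $p\le q$, while the conclusion fails. The correct hypothesis, present in \cite{BL1976} but dropped in the paper's transcription, is $q_0\ne q_1$ (and likewise $0<\theta<1$: at $\theta\in\{0,1\}$ the lemma would upgrade a weak endpoint bound to a strong one, which the same example refutes). None of this damages the paper: in its sole application the endpoints are $(p_0,q_0)=(\infty,\infty)$ and $(p_1,q_1)=(1,q/p)$ with $\theta=1/p\in(0,1)$, so $q_0\ne q_1$ holds and your mainline argument, or the cited theorem, applies verbatim; but as a proof of the statement exactly as quoted, your final paragraph asserts something untrue.
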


\begin{proof}[Proof of Lemma \ref{T:pqCarlesonembedding}]Let $$\beta\in \{0,\frac{1}{3}\}$$ be fixed. First,  the tree mapping
\begin{equation*}
\Lambda_\beta: f\to \ee_{Q_I}^\sigma f=\frac{1}{|Q_I|_\sigma}\int_{Q_I}f(z)\sigma(z)dA(z)
\end{equation*}
is bounded from $L^\infty(\dd,\sigma)$ into $L^\infty(\mathcal{Q}^\beta, a_t(\cdot))$ with norm $c_3 \leq 1$.

\bignobf{Claim:}  $\Lambda_\beta: L^1(\dd,\sigma)\to L^{t,\infty}(\mathcal{Q}^\beta, a_t(\cdot))$ is bounded.

\bigno If we can verify this claim, then let $$\theta=\frac{1}{p},$$ and it follows that $$\Lambda_\beta: L^p(\dd,\sigma)\to L^q(\mathcal{Q}^\beta, a_t(\cdot))$$ is bounded which will complete the proof of Lemma \ref{T:pqCarlesonembedding}.

\bigno
\emph{Proof of Claim:} Let $f>0$ and $\lambda>0$. Let $$\mathcal{Q}_\lambda=\{Q_I\in\mathcal{Q}^\beta:
|\ee_{Q_I}^\sigma f|>\lambda\}.$$
Let $$\{Q_{I_j}:j\in \Gamma\}$$ be the collection of maximal (with respect to inclusion) Carleson boxes in $\mathcal{Q}_\lambda$ for some index set $\Gamma$.  Then
 \begin{eqnarray*}
 \sum_{Q_I\in \mathcal{Q}_\lambda}a_t(Q_I) &=&\sum_{j\in\Gamma} \sum_{Q_I\subset
 Q_{I_j}} |Q_I|_\sigma^t\\
 &\leq& c_1\sum_{j\in\Gamma} |Q_{I_j}|_\sigma^t\\
 &\leq& c_1[\sum_{j\in\Gamma} |Q_{I_j}|_\sigma]^t.
 \end{eqnarray*}
Now the proof is complete by observing
 \begin{eqnarray*}
 \sum_{j\in \Gamma}|Q_{I_j}|_\sigma&\leq& \frac{1}{\lambda}\sum_{j\in\Gamma}\int_{Q_{I_j}}f(z)\sigma(z)dA(z)\\
 &\leq&
 \frac{1}{\lambda}\int_\dd f(z)\sigma(z)dA(z).
 \end{eqnarray*}
\end{proof}

\bigno Next, we need another result on the reverse doubling property {from \cite{FW2014} (Lemma 7)}.

\begin{lemma}\label{L:carlesonc} Let $\beta\in\{0, \frac{1}{3}\}$. Let $\mathcal{D}^\beta$ be a dyadic grid of $\tt$, and $\mathcal{Q}^{\beta}$ be a dyadic Carleson system over $\dd$. If a weight $\sigma$ has the reverse doubling property, then there is a constant $c$ such that for any $K \in \mathcal{D}^\beta$
\begin{equation}\label{E:carlesoncondition}
\sum_{Q_I\in\mathcal{Q}^\beta:Q_I\subset Q_K}|Q_I|_\sigma\leq c|Q_K|_\sigma.
\end{equation}
\end{lemma}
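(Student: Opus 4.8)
The plan is to exploit a single geometric identity that converts the reverse doubling hypothesis into a one-step multiplicative decay across dyadic generations, and then to sum a geometric series. The crucial observation is that $B_I$ is exactly the union of the Carleson boxes of the two dyadic children of $I$. Indeed, let $I_1, I_2 \in \mathcal{D}^\beta$ denote the children of $I$, so that $|I_1| = |I_2| = |I|/2$ and $I = I_1 \cup I_2$. Then $Q_{I_1} \cup Q_{I_2}$ consists of those $z$ with $1 - |I|/2 \le |z| < 1$ and $z/|z| \in I$, which differs from $B_I$ only on the circle $|z| = 1 - |I|/2$, a set of area zero. Since $\sigma$ is a weight (absolutely continuous with respect to $dA$) and $Q_{I_1}, Q_{I_2}$ are essentially disjoint, this gives $|B_I|_\sigma = |Q_{I_1}|_\sigma + |Q_{I_2}|_\sigma$. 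Combined with reverse doubling, this yields the fundamental estimate
$$|Q_{I_1}|_\sigma + |Q_{I_2}|_\sigma = |B_I|_\sigma \le \delta\, |Q_I|_\sigma, \qquad \delta < 1.$$

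Next I would organize the sum by generation. Since $Q_I \subset Q_K$ holds for dyadic intervals precisely when $I \subseteq K$, the boxes appearing in the sum are indexed by the descendants of $K$ in the dyadic tree. Writing $S_j = \sum_{I \subset K,\ |I| = |K|/2^j} |Q_I|_\sigma$ for the contribution of the $j$-th generation below $K$, we have $S_0 = |Q_K|_\sigma$. Each interval at level $j+1$ is the child of a unique interval $J$ at level $j$, so grouping the level-$(j+1)$ boxes under their parents and applying the fundamental estimate to each $J$ gives
$$S_{j+1} = \sum_{J \subset K,\ |J| = |K|/2^j}\big(|Q_{J_1}|_\sigma + |Q_{J_2}|_\sigma\big) = \sum_{J} |B_J|_\sigma \le \delta \sum_{J} |Q_J|_\sigma = \delta\, S_j.$$
Iterating, $S_j \le \delta^j |Q_K|_\sigma$.

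Finally, summing over all generations,
$$\sum_{Q_I \in \mathcal{Q}^\beta:\ Q_I \subset Q_K} |Q_I|_\sigma = \sum_{j=0}^{\infty} S_j \le \Big(\sum_{j=0}^\infty \delta^j\Big) |Q_K|_\sigma = \frac{1}{1-\delta}\,|Q_K|_\sigma,$$
so the claim holds with $c = (1-\delta)^{-1}$. There is no serious analytic obstacle once the geometry is set up; the real content of the argument — and the only step requiring genuine care — is the identification $B_I = Q_{I_1} \cup Q_{I_2}$, which is what makes the reverse doubling constant $\delta$ act as a uniform geometric ratio between successive generations. A minor technical nuisance is the measure-zero overlap on the bounding circles, harmless because $\sigma$ is absolutely continuous, and one should note that the argument runs verbatim for either shifted grid $\beta \in \{0, \frac{1}{3}\}$, since both are genuine dyadic grids closed under taking children.
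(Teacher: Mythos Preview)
Your argument is correct: the key identity $|B_I|_\sigma=|Q_{I_1}|_\sigma+|Q_{I_2}|_\sigma$ (up to a null circle) turns the reverse doubling bound into the one-step decay $S_{j+1}\le\delta S_j$, and summing the geometric series gives $c=(1-\delta)^{-1}$. The paper does not actually supply a proof here but cites \cite{FW2014}, Lemma~7; your derivation is exactly the natural one and is what that reference contains.
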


\bigno (\ref{E:carlesoncondition}) is known as the Carleson embedding condition.
 Combining with the result in \cite{ARS2002}, one may conjecture that this condition a necessary and sufficient condition for Carleson measures on $\mathcal{D}$.

\bigno Combining Lemma \ref{T:pqCarlesonembedding} and Lemma \ref{L:carlesonc}, we have
\begin{corollary}\label{C:carleson03} Let $1<p\leq q<\infty, t=\frac{q}{p}$ and $\sigma$ be a weight with the reverse doubling property. Then there is a constant $c$ such that
\begin{equation*}
\Big(\sum_{Q_I\in\mathcal{Q}^\beta}|Q_I|_\sigma^t(\ee^\sigma_{Q_I}f)^q\Big)^{\frac{1}{q}}\leq c
\Big(\int_\dd |f(z)|^p\sigma(z)dA(z)\Big)^\frac{1}{p}.
\end{equation*}
\end{corollary}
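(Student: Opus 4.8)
The plan is to read this corollary off directly from the two preceding lemmas, the only genuine content being a matching of exponents. First I would apply Lemma \ref{L:carlesonc}: since $\sigma$ has the reverse doubling property, there is a constant $c$ such that the Carleson embedding condition
$$\sum_{Q_I\in\mathcal{Q}^\beta:Q_I\subset Q_K}|Q_I|_\sigma\leq c|Q_K|_\sigma$$
holds for every $K\in\mathcal{D}^\beta$ (and for each fixed $\beta\in\{0,\frac{1}{3}\}$). This is exactly the hypothesis demanded by Lemma \ref{T:pqCarlesonembedding}, except that it appears at exponent $1$ rather than at the exponent $t=\frac{q}{p}$ that the embedding lemma requires.

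Next I would bridge this gap. Because $p\leq q$ we have $t=\frac{q}{p}\geq 1$, and for $t\geq 1$ the function $x\mapsto x^t$ is superadditive on $[0,\infty)$, so that $\sum_i a_i^t\leq(\sum_i a_i)^t$ for any nonnegative summands $a_i$. (Since $\sigma$ is a finite weight we have $|Q_K|_\sigma<\infty$, so the series above converges and the inequality may be applied term by term to a convergent series.) Taking $a_{Q_I}=|Q_I|_\sigma$ and then invoking Lemma \ref{L:carlesonc} gives
$$\sum_{Q_I\subset Q_K}|Q_I|_\sigma^t\leq\Big(\sum_{Q_I\subset Q_K}|Q_I|_\sigma\Big)^t\leq c^t|Q_K|_\sigma^t,$$
which is precisely the hypothesis of Lemma \ref{T:pqCarlesonembedding} with constant $c_1=c^t$.

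Finally, Lemma \ref{T:pqCarlesonembedding} then yields the boundedness of the tree mapping $\Lambda_\beta$ from $L^p(\dd,\sigma)$ into $L^q(\mathcal{Q}^\beta,a_t(\cdot))$, which is exactly the asserted inequality with some constant $c_2$. The only step requiring any thought is the passage from the $t=1$ Carleson condition supplied by Lemma \ref{L:carlesonc} to the $t=\frac{q}{p}$ condition needed by Lemma \ref{T:pqCarlesonembedding}; I expect this to be the main (indeed the only) obstacle, and it is dispatched cleanly by the superadditivity of $x\mapsto x^t$ for $t\geq 1$. Everything else is a direct citation of the two lemmas, so the corollary follows at once.
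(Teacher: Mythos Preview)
Your proof is correct and matches the paper's approach exactly: the paper simply writes ``Combining Lemma~\ref{T:pqCarlesonembedding} and Lemma~\ref{L:carlesonc}'' without further comment, leaving implicit the passage from the exponent-$1$ Carleson condition of Lemma~\ref{L:carlesonc} to the exponent-$t$ hypothesis of Lemma~\ref{T:pqCarlesonembedding}. You have made this bridge explicit via the superadditivity of $x\mapsto x^t$ for $t\ge 1$, which is precisely the missing line; note incidentally that the inequality $\sum a_i^t\le(\sum a_i)^t$ holds for arbitrary nonnegative summands regardless of convergence, so your aside about $\sigma$ being finite is not actually needed.
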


\bigno Next, we   prove an inequality for $K_\alpha^\beta$  whose formulation is of independent interests, although the proof is more or less standard now.

\begin{lemma}\label{L:twp reverse doubling d}Let $1<p\leq q<\infty$.
Let $\mu$ and $\nu$ be weights on $\dd$ such that $\mu^{1-p'}$ and
$\nu$ have the reverse doubling property. If
\begin{equation*}\label{E:rpq}
\sup_{I\subset\tt}\frac{|Q_I|_\nu^\frac{1}{q}|Q_I|_{\mu^{1-p'}}
^{\frac{1}{p'}}}{|Q_I|^{\frac{\alpha}{2}}}<\infty,
\end{equation*}
then there exists a constant $c_1$ such that for  $f \in L^p(\dd, \mu)$,
\begin{equation*}\label{E:kpq}
\Big(\int_\dd |K^{\beta}_\alpha(f)|^q\nu(z)dA(z)\Big)^{\frac{1}{q}}
\leq c_1\Big(\int_\dd |f(z)|^p\mu(z)dA(z)\Big)^\frac{1}{p}.
\end{equation*}
\end{lemma}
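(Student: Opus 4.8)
The plan is to run the standard two–weight argument for positive dyadic operators, reducing everything to two applications of the Carleson embedding in Corollary~\ref{C:carleson03}. Since $|K^\beta_\alpha f|\le K^\beta_\alpha|f|$ pointwise, I may assume $f\ge 0$. By duality for $L^q(\dd,\nu)$ it suffices to bound the bilinear form $\int_\dd K^\beta_\alpha f\cdot h\,\nu\,dA$ over all $h\ge 0$ with $\|h\|_{L^{q'}(\dd,\nu)}\le 1$. Expanding $K^\beta_\alpha$ and interchanging sum and integral, the box structure gives
\begin{equation*}
\int_\dd K^\beta_\alpha f\cdot h\,\nu\,dA=\sum_{I\in\mathcal{D}^\beta}\frac{1}{|Q_I|^{\frac{\alpha}{2}}}\Big(\int_{Q_I}f\,dA\Big)\Big(\int_{Q_I}h\,\nu\,dA\Big).
\end{equation*}

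Next I would introduce the dual weight $\sigma=\mu^{1-p'}$ and the function $g=f\mu^{p'-1}$. A direct computation using $(1-p')(1-p)=1$ shows that $\int_{Q_I}f\,dA=|Q_I|_\sigma\,\ee^\sigma_{Q_I}g$ and, crucially, $\|g\|_{L^p(\dd,\sigma)}=\|f\|_{L^p(\dd,\mu)}$; likewise $\int_{Q_I}h\,\nu\,dA=|Q_I|_\nu\,\ee^\nu_{Q_I}h$. Writing $A$ for the finite supremum in the hypothesis, so that $|Q_I|^{-\alpha/2}\le A\,|Q_I|_\nu^{-1/q}|Q_I|_\sigma^{-1/p'}$, and using $1-\frac{1}{p'}=\frac1p$ and $1-\frac1q=\frac1{q'}$, the bilinear form above is dominated by
\begin{equation*}
A\sum_{I\in\mathcal{D}^\beta}\Big(|Q_I|_\sigma^{\frac1p}\,\ee^\sigma_{Q_I}g\Big)\Big(|Q_I|_\nu^{\frac{1}{q'}}\,\ee^\nu_{Q_I}h\Big).
\end{equation*}

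Now Hölder's inequality on $\mathcal{D}^\beta$ with exponents $p,p'$ splits this sum into the product of $\big(\sum_I|Q_I|_\sigma(\ee^\sigma_{Q_I}g)^p\big)^{1/p}$ and $\big(\sum_I|Q_I|_\nu^{p'/q'}(\ee^\nu_{Q_I}h)^{p'}\big)^{1/p'}$. The first factor is controlled by $\|g\|_{L^p(\sigma)}$ by Corollary~\ref{C:carleson03} applied to the weight $\sigma=\mu^{1-p'}$ with both exponents equal to $p$ (so $t=1$), which is legitimate precisely because $\mu^{1-p'}$ has the reverse doubling property. The second factor is controlled by $\|h\|_{L^{q'}(\nu)}$ by Corollary~\ref{C:carleson03} applied to $\nu$ with exponents $(q',p')$ and index $t=p'/q'$; here the hypothesis of that corollary, $q'\le p'$, is exactly our assumption $p\le q$. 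Combining, and recalling $\|g\|_{L^p(\sigma)}=\|f\|_{L^p(\mu)}$ together with $\|h\|_{L^{q'}(\nu)}\le 1$, yields $\int_\dd K^\beta_\alpha f\cdot h\,\nu\,dA\lesssim\|f\|_{L^p(\mu)}$; taking the supremum over $h$ finishes the proof.

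The individual computations are routine; the one point demanding care — and where all three hypotheses are consumed — is the exponent bookkeeping in the last step. The substitution $g=f\mu^{p'-1}$ is what turns the unweighted box integrals into $\sigma$–averages while preserving the $L^p$ norm, so that it is the reverse doubling of $\mu^{1-p'}$ (rather than of $\mu$ itself) that is needed; the reverse doubling of $\nu$ handles the dual side; and the constraint $p\le q$ is exactly what makes $q'\le p'$, rendering the Carleson embedding applicable to $h$. I expect the only other thing to watch is a standard truncation/density argument justifying the interchange of summation and integration and the finiteness of the quantities involved, which I would indicate briefly rather than belabor.
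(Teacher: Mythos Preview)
Your proof is correct and follows essentially the same route as the paper: duality, the substitution $g=f\mu^{p'-1}$ to convert unweighted box integrals into $\sigma$--averages, the supremum hypothesis to absorb $|Q_I|^{-\alpha/2}$, H\"older on $\mathcal{D}^\beta$, and two applications of Corollary~\ref{C:carleson03}. The only cosmetic difference is the H\"older split: the paper uses exponents $(q,q')$, putting the off-diagonal embedding $(p,q)$ on the $\sigma$--side and the diagonal one $(q',q')$ on the $\nu$--side, whereas you use $(p,p')$, which swaps the roles (diagonal $(p,p)$ on $\sigma$, off-diagonal $(q',p')$ on $\nu$); either way the constraint $p\le q$ enters exactly once.
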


\begin{proof}
It is sufficient to show that there is a constant $c_1$ such that
\begin{equation}\label{E:norm inequality two two}
\Big(\int_\dd |K^\beta_\alpha(f\mu^{1-p'})|^q\nu(z)dA(z)\Big)^{\frac{1}{q}}\leq
c_1\Big(\int_\dd |f(z)|^p \mu^{1-p'}(z) dA(z)\Big)^\frac{1}{p}.
\end{equation}
Then  Lemma \ref{E:kpq} holds by replacing $f$ in (\ref{E:norm inequality two two}) with $f\mu^{p'-1}$.

\bigno Let $$f\in L^p(\dd, \mu^{1-p'})$$ and $$g\in L^{q'}(\dd,\nu).$$ Let $$c_2=\sup_{I\subset\tt}\frac{|Q_I|_\nu^\frac{1}{q}|Q_I|_{\mu^{1-p'}}
^{\frac{1}{p'}}}{|Q_I|^{\frac{\alpha}{2}}}.$$ Then
\begin{eqnarray*}
|\langle K^\beta_\alpha (f\mu^{1-p'}), g\rangle_{L^2(\nu)}|&=&\bigg|\sum_{Q_I\in\mathcal{Q}^\beta}
\frac{1}{|Q_I|^\frac{\alpha}{2}}\int_{Q_I}f(z)\mu^{1-p'}(z)dA(z)\int_{Q_I}g(z)
\nu(z)dA(z)\bigg|\\
&\leq& c_2\sum_{Q_I\in\mathcal{Q}^\beta}\frac{1}{|Q_I|_\nu^{\frac{1}{q}}}
\frac{1}{|Q_I|_{\mu^{1-p'}}^\frac{1}{p'}}
\int_{Q_I}|f(z)|
\mu^{1-p'}(z)dA(z)\\&\times&\int_{Q_I}|g(z)|\nu(z)dA(z)\\
&=&c_2\sum_{Q_I\in\mathcal{Q}^\beta} |Q_I|_{\mu^{1-p'}}^\frac{1}{p}
\frac{1}{|Q_I|_{\mu^{1-p'}}}\int_{Q_I}|f(z)|
\mu^{1-p'}(z)dA(z)\\
&& \times \
|Q_I|_\nu^{\frac{1}{q'}}\frac{1}{|Q_I|_\nu}\int_{Q_I}|g(z)|\nu(z)dA(z)\\
&\leq&c_2\bigg(\sum_{Q_I\in\mathcal{Q}^\beta}|Q_I|_{\mu^{1-p'}}^{\frac{q}{p}}
\Big(\frac{1}{|Q_I|_{\mu^{1-p'}}}\int_{Q_I}|f(z)|\mu^{1-p'}(z)dA(z)\Big)^q\bigg)^{\frac{1}{q}}\\
& & \times \  \bigg(\sum_{Q_I\in\mathcal{Q}^\beta}|Q_I|_{\nu}
\Big(\frac{1}{|Q_I|_\nu}\int_{Q_I}|g(z)|\nu(z)dA(z)\Big)^{q'}\bigg)^{\frac{1}{q'}}.
\end{eqnarray*}

\bigno Since $\mu^{1-p'}$ and $\nu$ have the reverse doubling property, by the $p$-$q$ Carleson embedding (Corollary \ref{C:carleson03}),
\begin{equation*}
\bigg(\sum_{Q_I\in\mathcal{Q}^\beta}|Q_I|_{\mu^{1-p'}}^{\frac{q}{p}}
\Big(\frac{1}{|Q_I|_{\mu^{1-p'}}}\int_{Q_I}|f(z)|\mu^{1-p'}(z)dA(z)\Big)^q\bigg)^{\frac{1}{q}}
\leq c_3\Big(\int_\dd |f(z)|^p\mu^{1-p'}(z)dA(z)\Big)^\frac{1}{p}
\end{equation*}
and
\begin{equation*}
\bigg(\sum_{Q_I\in\mathcal{Q}^\beta}|Q_I|_{\nu}
\Big(\frac{1}{|Q_I|_\nu}\int_{Q_I}|g(z)|\nu(z)dA(z)\Big)^{q'}\bigg)^{\frac{1}{q'}}
\leq c_4\Big(\int_\dd |g(z)|^{q'}\nu(z)dA(z)\Big)^\frac{1}{q'}.
\end{equation*}

\bigno Let $$c_1=c_2c_3c_4.$$ Then
we have
\begin{equation*}
|\langle K^\beta_\alpha f\mu^{1-p'}, g\rangle_{L^2(\nu)}|\leq c_1 \Big(\int_\dd |f(z)|^p\mu^{1-p'}(z)dA(z)\Big)^\frac{1}{p}
\Big(\int_\dd |g(z)|^{q'}\nu(z)dA(z)\Big)^\frac{1}{q'}.
\end{equation*}
The proof of Lemma \ref{L:twp reverse doubling d} is complete now.
\end{proof}

\bigno Now Lemma \ref{L:twp reverse doubling} follows from Lemma \ref{L:Form K to Dyadic} and Lemma  \ref{L:twp reverse doubling d}.  Then  Theorem \ref{T:main0405}  follows from Lemma \ref{L:twp reverse doubling} when applied to $K_1$ with  $p=q=2$ and $\mu$ being the Lebesgue measure.

\bignobf{Acknowledgement}

\medno  G. Cheng is supported by NSFC (11471249),
 Zhejiang Provincial Natural Science Foundation of China (LY14A010021).
  X. Fang  is supported by NSC of Taiwan (106-2115-M-008-001-MY2) and NSFC 11571248 during his visit to Soochow University in China.
Z. Wang is supported by NSFC (11601296) and NSF of Shaanxi (2017JQ1008).
 J. Yu is supported by NSFC (11501384).

\bigskip

\bigskip

G. Cheng, Department of Mathematics, Sun Yat-sen University, Guangzhou 510275, P. R. China
E-mail address: chgzh09@gmail.com

\bigno

X. Fang, Department of Mathematics, National Central University, Chung-Li
32001, Taiwan; Email: xfang@math.ncu.edu.tw

\bigno

Z. Wang, College of Mathematics and Information Sciences, Shaanxi Normal University, Xi'an 710062,
P. R. China; Email: zipengwang@snnu.edu.cn

\bigno

J. Yu, School of Mathematics, Sichuan University, Chengdu 610064, P. R. China; Email: jiayangyu@scu.edu.cn

\end{document}